\documentclass[a4paper,USenglish,cleveref,thm-restate]{lipics-v2021}
\pdfoutput=1

\usepackage{shortcuts/style}
\usepackage{shortcuts/shortcuts}

\title{Non-Additive Discrepancy: Coverage Functions in a Beck-Fiala Setting} 

\author{Tatiana Rocha Avila}{Goethe University Frankfurt, Germany}{rochaavila@em.uni-frankfurt.de}{https://orcid.org/0009-0008-6450-2444}{Funded by DFG Research Unit ADYN under grant DFG 411362735.}
\author{Lars Rohwedder}{University of Southern Denmark, Odense, Denmark}{rohwedder@sdu.dk}{https://orcid.org/0000-0002-9434-4589}{}
\author{Leo Wennmann}{University of Southern Denmark, Odense, Denmark}{wennmann@imada.sdu.dk}{https://orcid.org/0009-0001-3346-6494}{}

\authorrunning{T. R.~Avila, L.~Rohwedder and L.~Wennmann} 
\Copyright{Tatiana Rocha Avila and Lars Rohwedder and Leo Wennmann}

\keywords{Combinatorial Optimization, Discrepancy Theory}

\ccsdesc[500]{Theory of computation~Design and analysis of algorithms}

\relatedversion{}\relatedversiondetails{Full Version}{https://arxiv.org/abs/2602.09948}

\funding{\textit{Lars Rohwedder and Leo Wennmann}: Supported by Dutch Research Council (NWO) project "The Twilight Zone of Efficiency: Optimality of Quasi-Polynomial Time Algorithms" [grant number OCEN.W.21.268].}
\acknowledgements{We thank the anonymous reviewers for many helpful comments.}

\allowdisplaybreaks
\nolinenumbers

\EventEditors{Philip Bille, Seth Pettie, and Sabine Storandt}
\EventNoEds{3}
\EventLongTitle{34th Annual European Symposium on Algorithms (ESA 2026)}
\EventShortTitle{ESA 2026}
\EventAcronym{ESA}
\EventYear{2026}
\EventDate{August 31--September 4, 2026}
\EventLocation{L'Aquila, Italy}
\EventLogo{}
\SeriesVolume{388}
\ArticleNo{137}

\begin{document}

\maketitle

\begin{abstract}
Recent concurrent work by Dupr{\'{e}} la Tour and Fujii and by Hollender, Manurangsi, Meka, and Suksompong [ITCS'26] introduced a generalization of classical discrepancy theory to non-additive functions, motivated by applications in fair division.
As many classical techniques from discrepancy theory seem to fail in this setting, including linear algebraic methods like the Beck-Fiala Theorem [Discrete Appl. Math '81], it remains widely open whether comparable non-additive bounds can be achieved.

Towards a better understanding of non-additive discrepancy, we study coverage functions in a sparse setting comparable to the classical Beck-Fiala Theorem. 
Our setting generalizes the additive Beck-Fiala setting, rank functions of partition matroids, and edge coverage in graphs. 
More precisely, assuming each of the $n$ items covers only $t$ elements across all functions, we prove a constructive discrepancy bound that is polynomial in $t$, the number of colors~$k$, and $\log n$.
\end{abstract}

\section{Introduction}
The field of Discrepancy Theory studies
the inevitable imbalance that arises under discrete choices. 
In the classical setting, the vertices of a hypergraph
have to be colored in red or blue while bounding
the difference in the number of red and blue vertices
for each hyperedge. The maximum difference is called
the discrepancy of the coloring and the typical goal is to prove
existence of a coloring of some discrepancy, possibly under structural
restrictions on the hypergraph.
One of the cornerstones of this area is the classical result by Spencer~\cite{spencer1985six} which proves that any hypergraph with $n$
vertices and~$m = n$ hyperedges has a coloring of discrepancy~$\OO(\sqrt{n})$. 
Another classical result by Beck and Fiala~\cite{BeckF81} states
that any hypergraph where each vertex appears in at most $t$ hyperedges
has a coloring of discrepancy~$\OO(t)$.

Notably, several classical results in discrepancy theory were
initially non-constructive. This includes Spencer's Theorem~\cite{spencer1985six} as well as Banaszczyk's proof \cite{Banaszczyk98}
of a $\OO(\sqrt{\log n})$ bound in the Koml\'os
setting\footnote{The Koml\'os conjecture states that given
vectors of euclidean norm at most $1$, one can find signs
such that their signed sum is at most $\OO(1)$ in every dimension.}, 
which implies $\OO(\sqrt{t\log n})$ in the Beck-Fiala setting. 
As both remained longstanding open problems, it took significant effort until efficient algorithms were known~\cite{Bansal10,LovettM15,Rothvoss17,DadushGLN19,BansalDGL19}.
Remarkably, there is still progress on longstanding conjectures, with Bansal and Jiang~\cite{BansalJ25Improved,BansalJ25} very recently proving $\tilde \OO(\log^{1/4} n)$ for the Koml{\'{o}}s setting and~$\tilde \OO(\sqrt{t} + \sqrt{\log n})$ in the Beck-Fiala\footnote{The well known Beck-Fiala conjecture states that a discrepancy of $\OO(\sqrt{t})$ is possible.} setting.\footnote{Here, $\tilde \OO(\cdot)$ hides $\log\log n$ factors.}

Beyond the classical results, various extensions of the notion of discrepancy have been
studied. Due to the close connection of rounding linear programming relaxations,
techniques developed in this field have proven
useful in, for example, approximation algorithms~\cite{BansalRS22, HobergR17, RohwedderS25}.
Recently, concepts and techniques from discrepancy have been
studied in the context of fair allocation~\cite{ManurangsiS22,CaragiannisLS25,HollenderMMS26,DupreTF25}, with the following motivation:
consider the task of distributing~$n$ items among $k$ agents.
Suppose we have $m$ valuation functions $f_1,\dotsc,f_m : 2^n \rightarrow \mathbb R$, which each assign a value to every subset of items.
The different functions may represent incomparable
aspects of the items, for example monetary or sentimental value.
Our goal is to distribute the items among the agents such that each agent receives approximately the same value with respect to each of the functions.
Formally, we define:
\begin{definition}
    \label{def:multicolor-discrepancy}
    Let $\F = \{f_1, \dotsc , f_m: \set{0,1}^n \rightarrow \R \}$ be a family of functions. 
    The discrepancy of a $k$-coloring~$\chi : [n] \rightarrow [k]$ with respect to $\F$ is defined as
    \begin{equation*}
        \disc(\F, \chi) := \max_{i \in [m]} \max_{\ell,\ell' \in [k]} \big| f_i(\chi^{-1}(\ell)) - f_i(\chi^{-1}(\ell')) \big|.
    \end{equation*}
    Then, the $k$-color discrepancy of $\F$ is defined as
    \begin{equation*}
        \disc_\chi(\F, k) := \min_{\chi : [n] \rightarrow [k]} \disc(\F,\chi).
    \end{equation*}
\end{definition}
It is not hard to see that this notion of fairly allocating items
is a generalization of the classical setting of discrepancy, also referred to as multicolor discrepancy \cite{doerr2003multicolour}.
More precisely, consider one item for each vertex, $k=2$ agents
representing the two colors, and one function~$f$ for each hyperedge $H$
such that~$f(S) = |S\cap H|$.
Notably, many techniques from classical discrepancy
extend to more than $2$ colors and additive functions with bounded coefficients.
The setting of non-additive functions, however, is largely unexplored and the
transfer of techniques from classical discrepancy seems highly unclear. Partly, because many classical techniques
are linear algebraic in nature, \eg~the Beck-Fiala Theorem~\cite{BeckF81}, which intuitively leads to problems when the underlying functions are no longer linear.
Moreover, many additive results \cite{spencer1985six,Bansal10,Rothvoss17,LovettM15,BansalJM23} rely on the partial coloring method introduced by Beck~\cite{Beck81b} and Gluskin~\cite{gluskin1989extremal} which fails for general non-additive Lipschitz functions.

The non-additive setting was introduced simultaneously and independently by Hollender, Manurangsi, Meka and Suksompong~\cite{HollenderMMS26} and by Dupr{\'{e}} la Tour and Fujii~\cite{DupreTF25}.
Both show similar discrepancy bounds, but the analysis of the second set of authors yields the following slightly stronger result.
\begin{theorem}[\cite{DupreTF25}]
    \label{thm:discrepancy-for-primepower-k}
    For any prime power $k = p^{\nu}$ and any family of 1-Lipschitz functions $\F = \set{f_1, \dotsc , f_m: \set{0,1}^n \rightarrow \R}$, there exists a $k$-coloring of discrepancy $\OO(\sqrt{m \log (mk)})$.
\end{theorem}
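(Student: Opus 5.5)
The plan is to follow the classical route for necklace splitting: first a topological argument produces a \emph{fractional} allocation that is perfectly fair, then a randomized rounding step makes it integral. The prime-power assumption on $k$ is used only in the topological step.

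\textbf{Step 1: set up a continuous fair-division problem.} Place the $n$ items consecutively on the interval $[0,n]$, with item $j$ occupying $[j-1,j]$. A \emph{cut configuration} consists of
\begin{itemize}
\item $N=(k-1)m$ cut points, which split $[0,n]$ into $N+1$ pieces, and
\item a label in $[k]$ for each piece.
\end{itemize}
For each color $\ell$ this configuration determines a fractional vector $x^\ell\in[0,1]^n$, where $x^\ell_j$ is the length of item $j$'s segment that receives label $\ell$. We have $\sum_\ell x^\ell=\mathbf 1$, and at most $N$ items are fractional, namely those containing a cut. To evaluate $f_i$ on such a vector, use the multilinear extension
\[
F_i(x^\ell)=\mathbb E[f_i(R^\ell)],
\]
where the random sets $R^1,\dots,R^k$ are built by independently assigning each item $j$ to color $\ell$ with probability $x^\ell_j$. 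This extension is continuous in the configuration, and it agrees with $f_i$ on integral points.

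\textbf{Step 2: the topological step.} The goal is a configuration with $F_i(x^\ell)=F_i(x^{\ell'})$ for all $i$ and all $\ell,\ell'$. Following the standard proof of necklace splitting (Alon; for prime powers, \v{Z}ivaljevi\'c and Volovikov), parametrize configurations by the join $(\mathbb Z_p^\nu)^{*(N+1)}$ of the group $G=\mathbb Z_p^\nu$, with $|G|=k$. This space is $(N-1)$-connected, and $G$ acts freely on it by permuting labels. Consider the test map sending a configuration to the vector $\big(F_i(x^\ell)-\tfrac1k\sum_{\ell'}F_i(x^{\ell'})\big)_{i,\ell}$. It is $G$-equivariant and takes values in a $G$-representation of dimension $m(k-1)=N$ that contains no fixed points other than $0$. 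Volovikov's theorem says a $G$-map from an $(N-1)$-connected free $G$-space into the unit sphere of such a representation cannot exist, so the test map has a zero. This is exactly where $k$ must be a prime power. A zero is a fractional allocation $x^1,\dots,x^k$ that is perfectly fair for the extensions $F_i$ and has at most $(k-1)m$ fractional items.

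\textbf{Step 3: rounding.} Round the fractional items independently as in the definition of $F_i$, producing a coloring $\chi$. Fix $i$ and $\ell$. Then $f_i(\chi^{-1}(\ell))$ is a function of the independent choices made for the fractional items. Changing the outcome for one item moves that item into or out of color class $\ell$ at most once, so by 1-Lipschitzness it changes $f_i(\chi^{-1}(\ell))$ by at most $1$. McDiarmid's inequality then gives deviation $t$ from $F_i(x^\ell)$ with probability at most $2\exp(-2t^2/s)$, where $s$ is the number of relevant fractional items.

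\textbf{Main obstacle: controlling $s$.} Naively $s$ could be as large as $(k-1)m$, which would give $\sqrt{mk\log(mk)}$ instead of the claimed bound. The fix is to count only the fractional items that actually touch color $\ell$. An item may have a positive $x^\ell_j$ only if it contains a cut adjacent to a piece labelled $\ell$. So I would refine the parametrization, for instance by balancing cuts across labels or by rounding in stages, so that each color borders only $\OO(m)$ cuts. Whether this can be forced is the step I am least sure of.

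If this works, set $t=\Theta(\sqrt{m\log(mk)})$ and take a union bound over the $mk$ pairs $(i,\ell)$. Since all $F_i(x^\ell)$ coincide for fixed $i$, the triangle inequality gives $\disc(\F,\chi)\le 2t$ with positive probability.
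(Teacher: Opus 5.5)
\paragraph{Review.} The paper does not prove this theorem; it only sketches the route of \cite{DupreTF25}: multilinear extension, a necklace-splitting theorem for prime powers, then independent randomized rounding analysed via Lipschitzness. Your Steps 1--3 follow exactly this route. The topological step is set up correctly: a free $G$-action on the $(N-1)$-connected join $G^{*(N+1)}$, a test map into an $N$-dimensional representation with no nonzero fixed vectors, and Volovikov's theorem. As written, however, the argument proves only $O(\sqrt{mk\log(mk)})$. The missing piece is exactly the one you flag.

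Step 2 gives no control over how the $N+1$ pieces are distributed among the labels. The equivariant argument produces only \emph{some} zero of the test map, and fairness of the $F_i$ says nothing about piece counts. So nothing excludes one label taking every other piece and bordering all $N=(k-1)m$ cuts. For that color $s_\ell$ can be $\Theta(km)$, and McDiarmid then gives deviation of order $\sqrt{km\log(mk)}$. That is precisely the weaker bound of \cite{HollenderMMS26} that the paper contrasts with this theorem. This is not an artifact of McDiarmid: for additive $f_i$ with unit weights and fractional values bounded away from $0$ and $1$, the variance of $f_i(\chi^{-1}(\ell))$ is itself of order $s_\ell$. Your suggestions (``balancing cuts across labels'', ``rounding in stages'') are not yet arguments.

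What is needed is a \emph{constrained} necklace-splitting statement. You need a perfectly fair split in which every label receives at most $cm$ pieces for an absolute constant $c$, so that each color meets at most $2cm$ fractional items. In your framework, restrict the test map to the $G$-invariant subcomplex of $G^{*(N+1)}$ consisting of label assignments in which no label is used more than $cm$ times (a multiple chessboard complex). Then prove this subcomplex is still $(N-1)$-connected, so Volovikov's theorem still applies.

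This is genuine additional topological input, and the cap cannot be too tight. Take $m=1$, $k=3$ and cap $1$, which equals the average $(N+1)/k$. The subcomplex is then the chessboard complex $\Delta_{3,3}$, which is not simply connected, so the argument breaks. Constrained splitting results of this kind are the subject of \cite{JojicPZ21}, which the paper cites next to \cite{alon1987splitting}. With such a split in hand, your Step 3 goes through verbatim with $s_\ell=O(m)$ and gives the claimed $O(\sqrt{m\log(mk)})$.
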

To prove this, both sets of authors use the multilinear extension for a continuous relaxation of the problem and replace the typical linear algebraic method to obtain a solution with at most~$m$ fractional elements by a necklace cutting theorem \cite{alon1987splitting,JojicPZ21}.
Combining this with a standard randomized rounding approach and exploiting the Lipschitzness in the analysis yields the result.
Importantly, it is not constructive as the
necklace cutting theorem is non-constructive.

A bound on the Lipschitzness is necessary to obtain any reasonable
result, thus the setting of the theorem above
is perhaps the most general with respect to this assumption. However, the theorem still leaves a multitude 
of directions open, for example:
\begin{itemize}
    \item What about Beck-Fiala type settings?
    \item What about non-prime powers $k$?
    \item What about constructive bounds?
\end{itemize}
Towards these directions, we consider
a more restrictive class of functions, namely coverage functions,
which are defined in the next subsection.

\subsection{Sparse Coverage Functions}
We call a function $f: 2^n \rightarrow \mathbb Z_{\ge 0}$ a coverage function if there exist a universe~$U$ and subsets~$U_1,\dotsc,U_n \subseteq U$
such that $f(T) = |\bigcup_{i\in T} U_i|$ for all $T\subseteq [n]$.
We say that~$i$ covers~$u \in U$ if~$u\in U_i$. In other words,~$f(T)$ is the number of elements covered by the items~$T$.
Intuitively, coverage functions reward diversity over redundancy, because their function value grows with the number of covered    elements. Consequently, they are a natural subclass of submodular functions
that appear in classical problems of combinatorial optimization, for example the Set Cover problem \cite{ZorbasGKD10} or Max $k$-Coverage \cite{hochbaum}. For some other examples, refer to~\cite{Chakrabarty,DuLBS14, KarimiLH017,Cornuejols,feldman}.

We focus on the sparse setting where we are given coverage functions
$f_1,\dotsc,f_m$ and each item covers at most $t$ elements across all
$m$ functions. We say that such a family of functions is $t$-sparse (which we formally define in the next section).
Note that such functions are $t$-Lipschitz.
Examples for $t$-sparse families of coverage functions include:
\begin{itemize}
    \item 
The classical Beck-Fiala setting can be formulated as a $t$-sparse family of coverage functions, where for a hyperedge $H$, we define a coverage function
with $U = H$ and~$U_i = \{i\} \cap H$ for all $i\in [n]$.
\item The rank function of a partition matroid is a coverage function, where each item covers at most one element (assuming a capacity of $1$ for each set in the partition). 
Thus, a family of $m$ partition matroid rank functions is a $m$-sparse family.
\item For a given graph, the number of vertices covered by a set of edges is a coverage function, where each item (edge) covers at most two elements. Thus, a family of $m$ edge coverage functions forms a $2m$-sparse family.
\end{itemize}

\subsection{Our Contribution}

Towards the goal of understanding non-additive discrepancy for several natural classes of functions, we show the following results.

\paragraph*{Main Result}

Affirmatively answering all three previous questions, our main result is a constructive bound on the (multicolor) discrepancy of a $t$-sparse family of coverage functions, which is polynomial in~$t$, any color~$k$, and $\ln(n)$.

\begin{restatable}{theorem}{AllSets}
    \label{thm:all-sets-discrepancy}
    Let~$\F = \{f_1, \dotsc , f_m: \set{0,1}^n \rightarrow \R \}$ be a $t$-sparse family of coverage functions.
    In polynomial time, we can compute a $k$-coloring~$\chi : [n] \rightarrow [k]$ such that with high probability
    \begin{equation*}
        \disc_\chi(\F,k) < \OO \big(\tdiscrepancy\big).
    \end{equation*}
\end{restatable}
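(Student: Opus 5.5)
We aim for a bound of the form $\mathrm{poly}(t,k,\log n)$ (the exact exponent hidden in the macro we will not try to pin down) via a randomized construction analysed element by element. First we reduce to a cleaner combinatorial structure. Each function $f_i$ is a coverage function over a universe $U^{(i)}$. For a coloring $\chi$, the value $f_i(\chi^{-1}(\ell))$ counts the elements $u\in U^{(i)}$ that are covered by at least one item of color $\ell$. Call $d(u)$ the number of items covering $u$. Elements with small degree, say $d(u)\le D$ for a threshold $D$ to be chosen, behave almost additively. Elements with large degree are covered by every color with high probability under any reasonably balanced coloring, so they contribute (almost) equally to every color class.

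The plan is therefore to split each universe into \emph{low-degree} elements ($d(u)\le D$) and \emph{high-degree} elements ($d(u)> D$), and to handle them as follows.

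\textbf{High-degree elements.} Take $D=\Theta(k\log(nmt))$. Under a uniformly random coloring, a fixed high-degree element is missed by a fixed color with probability at most $(1-1/k)^{D}\le (nmt)^{-c}$. A union bound then shows that whp every high-degree element is covered by all $k$ colors, so these elements contribute identically to all color classes. Note that $t$-sparsity gives $\sum_u d(u)\le nt$, so the total number of elements is polynomial and the union bound is legitimate. The catch is that we cannot use a purely uniform coloring, because the low-degree elements need more care. So we need the low-degree coloring procedure to retain enough randomness for every high-degree element, e.g.\ each item's color is marginally uniform and the colors of the $D$ covering items are sufficiently independent.

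\textbf{Low-degree elements.} For element $u$ with covering set $S_u$ ($|S_u|\le D$), the contribution to color $\ell$ is the indicator $[\chi^{-1}(\ell)\cap S_u\ne\emptyset]$. This is determined by the pattern of colors on $S_u$. We group low-degree elements of $f_i$ into \emph{types}; each type is, up to relabeling, a set system, and the number of elements covered by color $\ell$ equals $\sum_u g_u(\chi|_{S_u})$. The key observation is that each item participates in at most $t$ elements overall, so the hypergraph $\{S_u\}$ has max degree $t$ and edge size $\le D$.

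We then proceed via iterated conditional randomized rounding, in the style of a Beck--Fiala/Lov\'asz-local-lemma argument made constructive by Moser--Tardos. For each function $f_i$ and each pair of colors, the discrepancy contribution is a sum of bounded-dependence indicators. Each item influences at most $t$ of them, and each indicator depends on at most $D$ items. We apply a martingale (Azuma/McDiarmid) bound to $f_i(\chi^{-1}(\ell))$ under a uniform random coloring. Changing one item's color changes at most $t$ indicators, so it changes $f_i(\chi^{-1}(\ell))$ by at most $t$. However, $f_i$ may have up to $nt$ relevant items, so plain McDiarmid only gives $t\sqrt{n}$.

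The main obstacle is to remove the $\sqrt n$ dependence, since a global bound must not depend on the size of the support. To overcome this, we plan to use a partial coloring / iterative scheme. We process items in rounds, and we want the deviation bad events to be local. To that end we \emph{split} each $f_i$ into chunks of elements whose covering items form groups of size $\mathrm{poly}(t,k,\log n)$, and we color items via a random pairing within chunks. For $k=2$, we pair up items of a chunk and give them opposite colors. Then each element's expected contribution is balanced, and deviations accumulate only from elements straddling pairs. For general $k$, we use random matchings into $k$-tuples receiving a random permutation of $[k]$, which keeps marginals uniform (so the high-degree analysis survives) while forcing exact balance of the additive part within each tuple. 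The residual imbalance for each $f_i$ comes from elements whose covering sets intersect tuples nontrivially. We bound it by a bounded-differences inequality with effective variance $\mathrm{poly}(t,k,D)$ per local block, since each tuple affects $\le kt$ elements. We then finish with a union bound over $m\le nt$ functions and $k^2$ color pairs, yielding the claimed $\mathrm{poly}(t,k,\log n)$ discrepancy with high probability.
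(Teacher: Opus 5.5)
\textbf{Verdict: there is a genuine gap.} Your high-degree part essentially matches the paper: the same threshold $\Theta(k\log n)$ on the number of covering items, followed by Chernoff and a union bound. The low-degree part, however, is where the whole difficulty lies, and there the proposed mechanism fails.

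\textbf{Why the tuple scheme fails.} Your bound of ``$\mathrm{poly}(t,k,D)$ per local block'' must be summed over all tuples that meet the covering sets of $f_i$ without being balanced inside them. For \emph{every} partition into tuples this number can be polynomial in $n$, which brings back a $\sqrt{n}$-type bound. A counterexample already exists in the additive case.
\begin{itemize}
\item Let the items be the edges of a $d$-regular simple graph on $N$ vertices. For each vertex $v$ let $f_v(T)=|T\cap\delta(v)|$; the dual sets are singletons and $t=2$.
\item Any tuple of $k\ge 2$ distinct edges spans at least three vertices, and at most one vertex lies on all of its edges. So the tuple meets, but is not contained in, the star of at least two vertices.
\item Hence any partition into tuples has at least $2|E|/k=Nd/k$ such splits in total, and some vertex $v$ is split by at least $d/k$ tuples.
\item Under independent uniformly random permutations, each split tuple adds an independent Bernoulli term of variance at least $(k-1)/k^2$ to $f_v(\chi^{-1}(\ell))$. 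Therefore $v$ has discrepancy $\Omega(\sqrt{d}/k)$ with constant probability.
\item For $K_N$ and $k=2$ this is $\Omega(n^{1/4})$, while $t=k=2$.
\end{itemize}
Two further problems:
\begin{itemize}
\item ``Chunking each $f_i$ separately'' is not well defined, because an item lies in up to $t$ functions. This overlap is exactly the Beck--Fiala obstacle.
\item The local lemma does not help. The deviation event for a single $f_i$ depends on all of its possibly $\Theta(n)$ items, so it is not a local bad event.
\end{itemize}

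\textbf{The missing idea.} The paper makes linear-algebraic rounding available despite non-linearity. If $D$ is a set of items no two of which lie in a common dual set, then each multilinear extension restricted to $D$ (other coordinates frozen) is \emph{linear}. The proof then proceeds as follows.
\begin{itemize}
\item Truncate every big set to exactly $s=\Theta(k\ln(ntk))$ items.
\item Greedily partition all items into $I\le ts+1$ such classes.
\item Start from the uniform fractional coloring and round one class at a time. Within a class, run Beck--Fiala iterative rounding on the value-preservation LP. Drop a function's constraints once it has at most $3t$ fractional occurrences. Always move to a \emph{random} vertex of the polytope, so that $\mathbb{E}[Y']=Y$.
\item Each round changes every $F_i^s$ by at most $3t$ and is a martingale step. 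Azuma over $O(ts)$ rounds gives $O(t\sqrt{ts\ln(ntk)})=O(\sqrt{t^3k}\,\ln(ntk))$.
\item Each truncated big set has at most one item per class, and marginals stay uniform conditioned on the past. So its colors are independent and uniform.
\end{itemize}
That last point is precisely the independence your high-degree argument needs, but for which you gave no mechanism.
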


\paragraph*{Technical Overview}

Throughout the rest of the paper, we use an equivalent dual definition of coverage functions:
for each element $u\in U$, introduce a set $S_u\subseteq [n]$ that describes which items cover $u$.
Then we call $\Ss = \bigcup_{u \in U} \Ss_u$ the dual sets.
Formally, we redefine the $t$-sparse family of coverage functions as follows.

\begin{restatable}[$t$-Sparse Family of Coverage Functions]{definition}{tSparseCoverageFunction}
    \label{def:t-sparse-coverage-functions}
    Let~$m, t \in \N$.
    For all~$i\in[m]$, let~$\Ss_i$ be a collection of sets~$S \subseteq [n]$ such that each element~$j \in [n]$ appears in at most~$t$ different sets~$S \in \Ss := \bigcup_{i \in [m]} \Ss_i$.
    Let~$\F = \{f_1, \dotsc , f_m: \set{0,1}^n \rightarrow \R \}$ denote a $t$-sparse family of coverage functions defined by~$f_i(T) = \sum_{S \in \Ss_i} \min \set{ |S\cap T|, 1}$.
\end{restatable}

In order to show \cref{thm:all-sets-discrepancy}, we distinguish two special cases: 
either the sizes of the sets~$S\in \Ss$ are all small or all big.
Importantly, the results in this overview primarily provide intuition for the algorithmic challenges to overcome for the main result and are not fully optimized.

In the case of only small sets, we first construct a fractional $k$-coloring~$Y$ with few fractional elements and zero discrepancy based on the multilinear extension.
The key insight is that we can compute a large set for which the restricted multilinear extension becomes \emph{linear} (see \cref{def:restricted-multilinear-extension,lem:restricted-multilinear-extension-is-linear}), thereby allowing the use of standard LP techniques to round~$Y$ while maintaining zero discrepancy.
As a last step, we randomly round the remaining fractional elements in~$Y$ to an integral $k$-coloring without significantly increasing the discrepancy.
More precisely, we obtain the following discrepancy bound in \cref{sec:small-sets}.\footnote{For exposition, we omit optimization details in \cref{sec:small-sets} to focus on core ideas. Thus, the bound in \cref{thm:small-sets-discrepancy} depends on~$m$; this dependency can be removed by replacing the standard LP techniques by the more involved randomized procedure used for \cref{thm:all-sets-discrepancy}.}

\begin{restatable}{theorem}{SmallSets}
    \label{thm:small-sets-discrepancy}
    Let~$\F = \{f_1, \dotsc , f_m: \set{0,1}^n \rightarrow \R \}$ be a $t$-sparse family of coverage functions where~$\abs{S} \le s$ for all~$S \in \Ss$.
    In polynomial time, we can compute a $k$-coloring~$\chi : [n] \rightarrow [k]$ such that with high probability
    \begin{equation*} 
        \disc_\chi(\F,k) < \sqrt{2m t^3 k s \cdot (1 + \ln (2mk))}.
    \end{equation*}
\end{restatable}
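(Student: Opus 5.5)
We follow the outline from the technical overview: build a fractional $k$-coloring of zero discrepancy with few fractional items, then round those items randomly. A fractional coloring is $Y=(y_{j\ell})\in[0,1]^{n\times k}$ with $\sum_\ell y_{j\ell}=1$. For color $\ell$ we evaluate $f_i$ by its multilinear extension,
\begin{equation*}
F_i(y_{\cdot\ell}) = \sum_{S\in\Ss_i}\Bigl(1-\prod_{j\in S}(1-y_{j\ell})\Bigr).
\end{equation*}
The starting point is the uniform coloring $y_{j\ell}=1/k$. It has zero discrepancy by symmetry, since $F_i$ is identical across colors.

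\textbf{Linearization.} Suppose we fix every variable outside a set $J\subseteq[n]$ in which no set $S\in\Ss$ contains two items. Then each product $\prod_{j\in S}(1-y_{j\ell})$ has at most one free factor, so $F_i$ restricted to $J$ is affine in $(y_{j\ell})_{j\in J}$. This is the restricted multilinear extension being linear, as in the lemma cited in the overview.

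We find such a $J$ greedily via a conflict graph on the unfixed items, with $j\sim j'$ whenever some $S\in\Ss$ contains both. Each item lies in at most $t$ sets of size at most $s$, so its degree is at most $t(s-1)$. A maximal independent set therefore has size at least $n'/(ts)$, where $n'$ is the number of currently fractional items.

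\textbf{Iterated rounding.} On $J$ we write an LP. The variables are $y_{j\ell}$ for $j\in J$, subject to $0\le y\le 1$ and $\sum_\ell y_{j\ell}=1$. The constraints $F_i(y_{\cdot\ell})=F_i(y_{\cdot 1})$ are kept for all $i\in[m]$ and $\ell\in\{2,\dots,k\}$, which is $m(k-1)$ linear equalities. The current point is feasible, so we move to a vertex. At a vertex the number of fractional items in $J$ is $\OO(mk)$: each fractional item contributes at least two fractional variables against one assignment equality.

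Items that become integral are fixed. We then recompute $J$ among the remaining fractional items, which are not yet fixed to integers. Iterating while $n'/(ts)\gtrsim mk$, each round strictly reduces the number of fractional items, and zero discrepancy is preserved exactly. We stop with $\OO(mkts)$ fractional items. I would choose the constants in the stopping rule so that the final count is at most $2mkts$, to match the stated bound.

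\textbf{Final rounding.} Each remaining fractional item $j$ independently receives color $\ell$ with probability $y_{j\ell}$. Then $\mathbb E[f_i(\chi^{-1}(\ell))]=F_i(y_{\cdot\ell})$, which is equal across colors. Changing the color of one item moves each $f_i(\chi^{-1}(\ell))$ by at most $t$, counting only the sets of $\Ss_i$ containing $j$. McDiarmid's inequality over $N\le 2mkts$ fractional items therefore gives
\begin{equation*}
\Pr\bigl[|f_i(\chi^{-1}(\ell))-F_i|\ge\lambda\bigr]\le 2e^{-2\lambda^2/(Nt^2)}.
\end{equation*}
Taking a union bound over the $mk$ pairs $(i,\ell)$ with $\lambda=\tfrac12\sqrt{2mt^3ks(1+\ln(2mk))}$ makes the failure probability small. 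Boosting to ``high probability'' is done by repeating the rounding and checking the outcome. The triangle inequality between two colors yields the stated discrepancy.

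\textbf{Main obstacle.} The delicate step is the iteration. Fixing variables outside $J$ must keep earlier constraints intact, which holds because we only move within the LP on $J$, where everything is affine. We must also ensure progress: the vertex argument has to be applied carefully, since the equalities involve all colors simultaneously. The precise constants then need tracking to land exactly on $\sqrt{2mt^3ks(1+\ln(2mk))}$.
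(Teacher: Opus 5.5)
Your proposal follows the paper's proof essentially step for step. It uses the uniform starting coloring and a greedy independent set of size at least $n'/(ts)$ in the conflict graph. On that set the restricted multilinear extension is linear, and a vertex of the rounding LP makes at least one more item integral. Finally, independent rounding is analyzed with McDiarmid's inequality and a union bound. Your use of the $m(k-1)$ equalities $F_i(y_{\cdot\ell})=F_i(y_{\cdot 1})$ instead of the paper's $mk$ value-preservation constraints is a harmless variant.

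\textbf{Correction on the constants.} A final count of $2mkts$ fractional items does \emph{not} give the stated bound. With $N=2mkts$ and your $\lambda$, the McDiarmid exponent is only $(1+\ln(2mk))/2$. The union bound over the $mk$ pairs then gives $e^{-1/2}\sqrt{2mk}$, which exceeds $1$ once $mk\ge 2$. You need $N\le mkts$, and your own argument already delivers this. A vertex has at most $m(k-1)$ fractional items in $J$, so you make progress whenever $|J|\ge n'/(ts)>m(k-1)$. Hence you can stop only once $n'\le m(k-1)ts\le mkts$. With that count, each pair $(i,\ell)$ fails with probability at most $2e^{-1-\ln(2mk)}=1/(emk)$. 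The union bound then gives total failure probability at most $1/e$, exactly as in the paper's rounding lemma, and your repetition step boosts this to high probability.
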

    
In the case of only big sets, the key insight is that it is highly unlikely to miss a color under a random coloring. The sparsity of our setting ensures that failures in different sets are \emph{nearly independent} allowing for standard concentration bounds. For a slightly stronger result, the Constructive Lov\'asz  Local Lemma \cite{MoserT10, sason2026lovaszlocallemmafundamentals} guarantees the existence of a global coloring where no set is missing a color.  This yields the perfect discrepancy result shown in \cref{sec:big-sets}.

\begin{restatable}{theorem}{BigSets}
    \label{thm:big-sets-discrepancy}
    Let~$k \in \N$ and~$\F = \{f_1, \dotsc , f_m: \set{0,1}^n \rightarrow \R \}$ be a $t$-sparse family of coverage functions where~$\abs{S} \ge \sLargeSets$ for all~$S \in \Ss$.
    In randomized polynomial time, we can compute a $k$-coloring $\chi:[n]\rightarrow[k]$ such that~$\disc_\chi(\F,k) = 0$.
\end{restatable}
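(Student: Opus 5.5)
The plan is to find a coloring in which every set $S \in \Ss$ contains at least one item of every color. Such a coloring has discrepancy zero. Indeed, then $\min\{|S\cap \chi^{-1}(\ell)|,1\}=1$ for every $S\in\Ss$ and every color $\ell\in[k]$, so $f_i(\chi^{-1}(\ell)) = |\Ss_i|$ for every $i$ and $\ell$. Hence all color classes have identical value under every $f_i$, and the discrepancy is $0$. We will use the constructive Lov\'asz Local Lemma of Moser and Tardos over the independent random variables $\chi(j)$, $j\in[n]$, each uniform on $[k]$.

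Define for each $S\in\Ss$ the bad event $A_S$ that some color is missing from $S$. Each bad set needs only a single representative, so we may shrink every $S$ to an arbitrary subset $S'\subseteq S$ of size exactly $s:=\sLargeSets$. Any coloring that hits all colors in $S'$ also hits them in $S$, and the shrunken family is still $t$-sparse. A union bound over colors gives
\[
\Pr[A_S]\le k\,(1-1/k)^{s}\le k\,e^{-s/k}.
\]
The event $A_S$ depends only on the variables in $S'$. Each item lies in at most $t$ sets, so $A_S$ shares variables with at most $d\le s(t-1)< st$ other events.

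The symmetric LLL condition $e\,p\,(d+1)\le 1$ then reads $e\,k\,e^{-s/k}\,st\le 1$. This holds once $s\ge k\ln(e\,k\,s\,t)$, for instance for $s$ on the order of $k\ln(kt)$ with a suitable constant, since then $s/k$ dominates $\ln s$. I expect the threshold $\sLargeSets$ in the statement to be chosen exactly to make this inequality hold, and checking this is the only calculational step. The mild obstacle is the self-referential $\ln s$ term: I would handle it by plugging in $s = c\,k\ln(kt)$ and verifying $c\ln(kt)\ge 1+\ln k+\ln(c k\ln(kt))+\ln t$ for a large enough constant $c$.

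Finally, Moser--Tardos resampling (resample the variables of any violated $A_S$) terminates after an expected $\sum_S p/(1-p)\cdot O(1)=O(|\Ss|)$ resamplings. This gives randomized polynomial time, and by Markov's inequality or repetition the algorithm succeeds with high probability. Its output satisfies no $A_S$, so every set sees all $k$ colors, and $\disc_\chi(\F,k)=0$.
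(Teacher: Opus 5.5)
Your proposal is correct and matches the paper's proof essentially step for step: reduce to making every set a $k$-rainbow set, shrink each set to an arbitrary subset of size exactly $s$ to cap the dependency degree at $s(t-1)$, bound $\Pr[A_S]\le k e^{-s/k}$ by a union bound over colors, and apply the constructive symmetric LLL. The paper's calculation corresponds to $s=6k\ln(tk)$ (your $c=6$), which gives $e\,p\,(d+1)\le 6e\ln(tk)/(t^5k^4)\le 1$ whenever $k\ge 2$; the case $k=1$ is trivial.
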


The techniques necessary to prove \cref{thm:small-sets-discrepancy,thm:big-sets-discrepancy} capture the main difficulties to overcome when showing the discrepancy bound for sets of all sizes in \cref{thm:all-sets-discrepancy}.
We start by introducing an artificial size threshold~$s$ to distinguish between small and big sets.
Remarkably, it suffices to ensure zero discrepancy \whp for a representative subset of size~$s$ for all big sets.
Combining the key ideas of the two cases, we start with an initial fractional $k$-coloring of zero discrepancy and iteratively increase the number of integral elements by a random procedure that maintains the value of the initial $k$-coloring in expectation while bounding the rounding error of the \emph{linear} restricted multilinear extensions \wrt the small sets.
Importantly, all elements of the same set are rounded \emph{independently}.
Putting the pieces together, we obtain a discrepancy bound \wrt the small sets from the rounding procedure and zero discrepancy \whp for the big sets by standard concentration bounds.
For more details, refer to \cref{sec:all-sets}.

\subsection{Future Directions} 

In \cref{thm:all-sets-discrepancy}, we show a discrepancy bound of~$\OO(\sqrt{t^3k} \cdot \log(nkt))$.
A simple lower bound shows that the discrepancy in our sparsity setting must be at least~$t$: Consider a single item and a single function, where the item covers~$t$ elements. 
In other words, it is not possible to color the item in more than one color, thus all other colors cannot be used which results in a discrepancy of~$t$.
Therefore, our bound is tight up to a factor of~$\OO(\sqrt{tk} \cdot \log(nkt))$.
In the following, we will comment on the two factors~$\OO(\log(nkt))$ and~$\OO(\sqrt{tk})$ separately.

The $\OO(\log(nkt))$ factor is inherent to our approach in order to ensure that the big sets have \whp zero discrepancy.
Avoiding this factor requires a new approach that does not rely on a union bound.
Similar logarithmic factors are lost in other
methods in the non-additive setting that are also based on random colorings, \eg both previous papers~\cite{HollenderMMS26,DupreTF25}.
Compared to the lower bound of $\Omega(\sqrt{m})$ by Meka and Manurangsi~\cite{ManurangsiM26} in the additive setting\footnote{There are no lower bounds specific to the non-additive setting.}, they lose a factor of~$\OO(\sqrt{\log (mk)})$ due to the use of concentration and union bounds.\footnote{To be precise, \cite{HollenderMMS26} actually loses a factor of~$\OO(\sqrt{k \log (mk)})$ due to their slightly weaker bound.}
It also remains an interesting open question whether the gap of~$\OO(\sqrt{\log (mk)})$ can be closed there.
Moreover, this extra factor is in some sense comparable to obtaining the standard bound of~$\OO(\sqrt{n \log n})$ in traditional discrepancy where randomly rounding a fractional $2$-coloring with only $n$ fractional elements is analyzed with concentration and union bounds as well.
Improving the bound to~$\OO(\sqrt{n})$ demanded the development of more refined rounding techniques~\cite{spencer1985six,Bansal10,Rothvoss17,LovettM15,BansalJM23,LevyRR17,PesentiV23} that do not require a union bound, however, heavily rely on additivity.
This raises the intriguing question whether the extra factor can be avoided in the non-additive setting as well.

The~$\OO(\sqrt{tk})$ factor is the overhead of our iterative approach. 
In fact, the naive analysis of the number of iterations results in about~$\OO(tk)$ which yields an overhead of~$\OO(tk)$ that we improve upon using Azuma's inequality.
We leave open whether this factor can be improved further.

Furthermore, observe that our notion of sparsity relies on the structure of coverage functions.
In an alternative definition of sparsity, as mentioned in~\cite{HollenderMMS26}, each item~$i$ influences only~$t$ functions, more precisely there exists some $S\subseteq [n]$ such that $f(S \cup \{i\}) \neq f(S)$. 
Under this notion, they show a non-constructive discrepancy bound of~$\OO((ntk)^{1/3} \cdot (\log(ntk))^{1/3})$ for any prime~$k$.
As this assumption is weaker than ours, it raises the intriguing question whether a similar bound holds for coverage or even broader function classes (under additional assumption of 1-Lipschitzness). In the case of partition matroid rank functions where both notions are equivalent, we affirmatively answer this question with an efficient algorithm to compute a $k$-coloring with no restrictions on~$k$.
Independent of the sparse setting, many questions
remain open in non-additive discrepancy. 
In this paper, we emphasize natural function classes
such as coverage functions, matroid rank functions, or generally monotone submodular functions that will form interesting benchmarks
for future research.

\section{Preliminaries}
\label{sec:preliminaries}


Throughout, we use the following notation.
We write~$[n] = \set{1, \dotsc, n}$ for any $n \in \N$.
For any set~$S \subseteq [n]$, denote its complement by~$\conj{S} := [n] \backslash S$ and its indicator vector by~$\One_S \in \set{0,1}^n$.

An integral \emph{$k$-coloring} of $n$ elements is a function $\chi : [n] \rightarrow [k]$ that assigns each element exactly one color.
A \emph{fractional $k$-coloring} is a function $X : [n] \rightarrow \Delta_k$ with the $k$-simplex $\Delta_k = \set{x \in [0,1]^k  \,|\, \sum_{\ell \in [k]} x_\ell = 1}$.
For any $k$-coloring~$\chi$, the elements of color $\ell \in [k]$ are interchangeably written as~$\chi^{-1}(\ell)$ and the vector~$\chi_\ell \in \set{0,1}^n$ where~$\chi_{\ell j} = 1$ for any~$j \in [n]$ with~$\chi_j = \ell$.
Similarly, for any fractional $k$-coloring~$X$, we use the vector~$X_\ell \in [0,1]^n$ for all elements of color $\ell \in [k]$.
Slightly abusing notation, we also use the vector~$X_j \in [0,1]^k$ with $ \sum_{\ell \in [k]} X_{j \ell} = 1$ for the coloring of a single element~$j \in [n]$.


\subsection{Rounding the Linear Multilinear Extension}

We extend our discrete functions to the continuous setting via the \emph{multilinear extension}.

\begin{definition}[Multilinear Extension]
    \label{def:multilinear-extension}
    For any function~$f:\set{0,1}^n \rightarrow \R$, its multilinear extension~$F:[0,1]^n\rightarrow\R$ is defined as
    \begin{equation*}
        F(x)= \E_{S\sim x}[f(S)] = \sum\nolimits_{S\subseteq [n]}f(S)\cdot \prod\nolimits_{i\in S}x_i\prod\nolimits_{j\in \conj{S}}(1-x_j).
    \end{equation*}
\end{definition}

Importantly, in the case of coverage functions, one can evaluate the multilinear extension in time polynomial in the size of the underlying universe, since it is straight-forward to compute the probability that an element of the universe is covered.
In \cref{sec:small-sets,sec:all-sets}, we consider the following restricted multilinear extension.

\begin{definition}[Restricted Multilinear Extension]
    \label{def:restricted-multilinear-extension}
    Let~$f :\set{0,1}^n \rightarrow \R$ be a function
    and~$X : [n] \rightarrow \Delta_k$ a fractional $k$-coloring.
    For any set~$D \subseteq [n]$, denote the restricted multilinear extension by~$F \vert_D :[0,1]^{\abs{D}} \rightarrow \R$ where the values~$X_{\bar{d}}$ are fixed for all~$\bar{d} \in \conj{D}$.
\end{definition}

In words, restricting the multilinear extension to elements in~$D$ reduces the number of variables to~$\abs{D}$ and treats the values of the elements in~$\conj{D}$ as constants.
Importantly, the restricted multilinear extension becomes \emph{linear} for the following choice of~$D$.

\begin{restatable}{lemma}{RestrictedMultilinearExtension}
    \label{lem:restricted-multilinear-extension-is-linear}
    Let~$f:\set{0,1}^n \rightarrow \R$ be a
    coverage function with dual sets $\Ss$ and let~$X : [n] \rightarrow \Delta_k$ a fractional $k$-coloring.
    For any set~$D \subseteq [n]$ with~$\abs{S \cap D} \le 1$ for all~$S \in \Ss$, the restricted multilinear extension~$F \vert_D (X)$ is linear.
\end{restatable}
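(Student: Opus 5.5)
The plan is to compute the multilinear extension of a coverage function explicitly, set by set, and then observe that fixing the coordinates outside $D$ leaves at most one free variable in each term. Fix a color $\ell\in[k]$ and write $x = X_\ell \in [0,1]^n$. By \cref{def:t-sparse-coverage-functions}, $f(T) = \sum_{S \in \Ss} \min\set{|S\cap T|,1}$. Since $F$ is an expectation, it is linear in $f$, so
\begin{equation*}
F(x) = \sum_{S\in\Ss} \Pr_{T\sim x}\bigl[S\cap T\neq\emptyset\bigr] = \sum_{S\in\Ss}\Bigl(1-\prod_{j\in S}(1-x_j)\Bigr).
\end{equation*}
Here we use that under $T\sim x$ each $j$ is included independently with probability $x_j$. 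This is the same closed form the paper alludes to when remarking that the multilinear extension of a coverage function is efficiently computable.

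Next, I restrict to $D$. The values $x_{\bar d}$ for $\bar d\in\conj{D}$ are constants, so for each $S\in\Ss$ I split the product as
\begin{equation*}
\prod_{j\in S}(1-x_j) = c_S \cdot \prod_{d\in S\cap D}(1-x_d), \qquad c_S := \prod_{\bar d\in S\setminus D}(1-x_{\bar d}).
\end{equation*}
The constant $c_S$ does not depend on the free variables. The hypothesis $|S\cap D|\le 1$ gives two cases. If $S\cap D=\emptyset$, the term $1-c_S$ is a constant. If $S\cap D=\{d_S\}$, the term is $1-c_S+c_S x_{d_S}$, which is affine in the single variable $x_{d_S}$.

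Summing over $S\in\Ss$ then gives
\begin{equation*}
F\vert_D(x) = \alpha + \sum_{d\in D}\beta_d x_d,
\end{equation*}
with $\alpha=\sum_{S}(1-c_S)$ and $\beta_d=\sum_{S: S\cap D=\{d\}} c_S \ge 0$. So $F\vert_D$ is linear (affine, i.e.\ linear up to an additive constant) in the variables indexed by $D$. The same holds for each color $\ell$ separately, so it also holds for the map $X\mapsto (F\vert_D(X_\ell))_{\ell\in[k]}$.

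There is no real obstacle here. The only care needed is to be explicit about the meaning of ``linear'': it means affine in the restricted coordinates, with coefficients depending on the fixed values $X_{\bar d}$ outside $D$. That is exactly what later lets the paper round the $D$-coordinates with LP or linear techniques while preserving $F\vert_D$.
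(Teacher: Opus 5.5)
Your proof is correct and follows the paper's argument: both write $F$ as a sum over the dual sets $S$ via linearity of expectation and use $|S\cap D|\le 1$ to see that each summand depends affinely on at most one free coordinate. Your explicit formula $1-c_S\prod_{d\in S\cap D}(1-x_d)$ makes the coefficients concrete and correctly reads ``linear'' as affine. It also avoids the decomposition into independent random subsets $R_j$ that the paper invokes but does not actually need.
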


We defer the proof to Appendix~\ref{sec:appendix}.
In the following, we show how to take advantage of this insight when showing small discrepancy bounds.
Given a fractional $k$-coloring~$X : [n] \rightarrow \Delta_k$, let~$F_i \vert_{D}$ denote the restricted multilinear extension \wrt~$X$ for all~$i \in [m]$.
By \cref{lem:restricted-multilinear-extension-is-linear}, all functions~$F_i\vert_{D}$ are linear. 
Thus, the problem of coloring all elements in~$D$ while preserving the function values~$F_i\vert_{D}(X_{\ell})$ can be modeled as the following linear program \emph{RoundingLP}.
    \begin{alignat*}{3}
        & \forall i \in [m], \ell \in [k] \qquad \qquad \qquad & F_i\vert_{D}(X'_{\ell})  & = F_i\vert_{D}(X_{\ell}) \qquad \qquad \qquad && \text{\descriptions Value Preservation} \\
        & \forall d \in D \qquad \qquad \qquad & \sum\nolimits_{\ell \in [k]} X'_{\ell d}  & = 1  \qquad \qquad \qquad && \text{\descriptions Valid Coloring} \\
        & \forall d \in D, \ell \in [k] \qquad \qquad \qquad & X'_{\ell d} & \ge 0 \qquad \qquad \qquad && \text{\descriptions Non-Negativity}
    \end{alignat*}
Excluding the non-negativity constraints, RoundingLP has~$mk + \abs{D}$ many constraints.
As the values for all~$\bar{d} \in \conj{ D}$ are fixed, RoundingLP has~$\abs{D} \cdot k$ many variables~$X'_{\ell d}$ for all~$d \in  D$ and~$\ell \in [k]$.
In \cref{sec:all-sets}, we use the following lemma and RoundingLP to round a fractional $k$-coloring~$X$ to a more integral $k$-coloring~$X'$ while maintaining the value of~$X$ in expectation.

\begin{restatable}[Rounding in Expectation]{lemma}{RoundingExpectation}
    \label{lem:rounding-with-expectation}
    Let $A \in \R^{m \times n}$ and $b \in \R^m$.
    Given $X \in [0,1]^n$ with~$AX = b$, we can sample a random vector~$X' \in [0,1]^n$ with $\E[X']= X$, $AX' = b$ and at most~$m$ fractional variables in polynomial time.
\end{restatable}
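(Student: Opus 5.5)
We prove the lemma by the standard randomized iterated-rounding walk on the polytope $P = \{Y \in [0,1]^n : AY = b\}$, keeping the current point as a martingale. Start with $Y^{(0)} = X$. Maintain the invariants $AY = b$ and $Y \in [0,1]^n$, and let $F(Y)$ denote the set of coordinates with $0 < Y_j < 1$. Each step fixes at least one more coordinate, so there are at most $n$ steps, and each step costs one linear-algebra computation. The procedure stops as soon as $|F(Y)| \le m$.

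In a step with $|F(Y)| > m$, consider the linear system $A_{F} z = 0$ with $z$ supported on $F = F(Y)$. Here $A_F$ is the restriction of $A$ to the columns in $F$. It has $m$ rows and more than $m$ columns, so it has a nonzero solution $z$, which Gaussian elimination finds in polynomial time. Now take the largest $\alpha > 0$ and $\beta > 0$ with $Y + \alpha z \in [0,1]^n$ and $Y - \beta z \in [0,1]^n$. Both are finite and positive because $z \neq 0$ is supported on strictly fractional coordinates. Set
\[
Y' = \begin{cases} Y + \alpha z & \text{with probability } \tfrac{\beta}{\alpha+\beta},\\ Y - \beta z & \text{with probability } \tfrac{\alpha}{\alpha+\beta}. \end{cases}
\]
This choice gives $\E[Y' \mid Y] = Y$. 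Moreover $AY' = AY = b$ because $Az = 0$, and $Y' \in [0,1]^n$. By maximality of $\alpha$ and $\beta$, in either outcome at least one coordinate of $F$ becomes $0$ or $1$. Coordinates outside $F$ stay unchanged since $z$ vanishes there. Hence $|F|$ strictly decreases.

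The process terminates after at most $n$ steps at some $X'$ with at most $m$ fractional coordinates, $AX' = b$, and $X' \in [0,1]^n$. For the expectation, the sequence $Y^{(0)}, Y^{(1)}, \dots$ is a martingale with bounded length. This holds if we pad it by $Y^{(s+1)} = Y^{(s)}$ once it stops. Iterating the tower property then gives $\E[X'] = X$.

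The only point needing care is exactness in polynomial time. The step sizes $\alpha$ and $\beta$ are ratios computed from rational data. Bit complexity stays polynomial if $z$ is chosen as a basic null-space vector, for instance via Cramer's rule on an $(m+1)$-column submatrix. I do not expect any genuine obstacle beyond this routine bookkeeping.
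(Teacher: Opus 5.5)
Your proof is correct, but it takes a different route from the paper.

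\textbf{The paper's route.} It argues statically. Every vertex of $P=\{y\in[0,1]^n : Ay=b\}$ has at most $m$ fractional coordinates, because the columns of $A$ indexed by the fractional coordinates of a vertex are linearly independent. So it writes $X$ as a convex combination of vertices of $P$ and samples a vertex according to the weights. To find that combination in polynomial time, it solves the dual of the exponential-size decomposition LP with the Ellipsoid method, using linear optimization over $P$ as the separation oracle. It then solves the primal restricted to the polynomially many vertices encountered.

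\textbf{Your route.} You run a martingale walk in the null space of the fractional columns and fix at least one coordinate per step. This needs only Gaussian elimination and at most $n$ steps. Your bit-complexity worry is harmless, and you could state the reason more sharply: $z$ depends only on $A_F$, not on $Y$. So each step multiplies the common denominator of $Y$ by $|z_{j^*}|$, which adds only polynomially many bits, and $n$ steps keep the total polynomial.

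\textbf{What each route buys.} The paper's construction gives an explicit distribution with polynomial support, at most $n+1$ vertices by Carath\'eodory, and the output is always a vertex of $P$. Your walk also ends at a vertex if you continue until $A_F$ has trivial kernel instead of stopping at $|F|\le m$. Your argument avoids the Ellipsoid machinery entirely and makes the martingale structure explicit step by step. It is also exactly the kind of procedure the paper needs in its iterative rounding lemma for sets of all sizes, where value-preservation constraints are dropped as functions become sparse. With your walk, one simply takes $z$ in the kernel of the currently active constraints at each step, rather than re-invoking a full convex decomposition each round.
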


We defer the proof to Appendix~\ref{sec:appendix}.


\subsection{Concentration Bounds}

In order to bound the randomized rounding error in \cref{sec:small-sets}, we use McDiarmid's inequality.

\begin{lemma}[McDiarmid's Inequality \cite{mcdiarmid1989method}]
    \label{lem:mcDiarmid-inequality}
    Let~$Y = (Y_1, \dotsc, Y_q) \in \set{0,1}^q$ be a vector of independent random variables.
    Let~$h : \set{0,1}^q \rightarrow \R$ be any function satisfying $\abs{h(Y)- h(Y')} \le t$ whenever~$Y$ and~$Y'$ differ in at most one coordinate.
    For any $r > 0$, we have
    \begin{equation*}
        \Pr \big(\abs{h(Y) - \E[h(Y')]} \ge r \big) \le 2 \exp \left(- \frac{2r^2}{qt^2}\right).
    \end{equation*}
\end{lemma}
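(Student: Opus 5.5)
The statement to prove is McDiarmid's inequality (\cref{lem:mcDiarmid-inequality}). Since it is cited from \cite{mcdiarmid1989method}, I will give the standard martingale argument: build the Doob martingale of $h(Y)$, bound its increments, and apply the Azuma--Hoeffding argument.

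\textbf{Martingale and increment ranges.} Define $Z_0=\E[h(Y)]$ and $Z_i=\E[h(Y)\mid Y_1,\dots,Y_i]$ for $i\in[q]$, so $Z_q=h(Y)$. Set $D_i=Z_i-Z_{i-1}$; then $\E[D_i\mid Y_1,\dots,Y_{i-1}]=0$. Fix $Y_1,\dots,Y_{i-1}$ and put
\begin{equation*}
g(y)=\E\big[h(Y_1,\dots,Y_{i-1},y,Y_{i+1},\dots,Y_q)\big],
\end{equation*}
where the expectation is over $Y_{i+1},\dots,Y_q$. By independence, $Z_i=g(Y_i)$. Moreover, $Z_{i-1}=\E[g(Y_i)]$, with the expectation taken over $Y_i$ alone. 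Comparing the two choices of $y$ coordinatewise inside the expectation and using the bounded-difference hypothesis gives $\abs{g(0)-g(1)}\le t$. Hence, conditionally on $Y_1,\dots,Y_{i-1}$, the increment $D_i$ lies in an interval of length at most $t$.

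\textbf{Exponential moments.} Hoeffding's lemma states that a mean-zero random variable supported in an interval of length $c$ satisfies $\E[e^{\lambda D}]\le e^{\lambda^2c^2/8}$. Applying it conditionally gives $\E[e^{\lambda D_i}\mid Y_1,\dots,Y_{i-1}]\le e^{\lambda^2t^2/8}$. Peeling off the increments one at a time by the tower rule yields
\begin{equation*}
\E\big[e^{\lambda(h(Y)-\E h(Y))}\big]\le e^{\lambda^2 q t^2/8}.
\end{equation*}

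\textbf{Tail bound.} Markov's inequality gives $\Pr(h(Y)-\E h\ge r)\le \exp(-\lambda r+\lambda^2 qt^2/8)$. Choosing $\lambda=4r/(qt^2)$ makes the right-hand side $\exp(-2r^2/(qt^2))$. Applying the same argument to $-h$ bounds the lower tail identically, and a union bound over the two tails gives the factor $2$. Here $\E[h(Y')]$ in the statement is read as $\E[h(Y)]$, with $Y'$ an identically distributed copy.

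\textbf{Main obstacle.} The only delicate step is the increment bound. One must show that the conditional range of $D_i$ is at most $t$, not $2t$, since a range of $2t$ would weaken the constant in the exponent. This requires using independence, so that the conditional law of $Y_{i+1},\dots,Y_q$ does not depend on $Y_i$. Hoeffding's lemma itself is standard: it follows from convexity of $e^{\lambda x}$ on the interval together with a second-order Taylor bound on the resulting log-moment function.
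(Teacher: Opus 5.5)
Your proof is correct: the Doob martingale with conditional increment range at most $t$, then Hoeffding's lemma, then the choice $\lambda = 4r/(qt^2)$ gives exactly the stated bound. Reading $\E[h(Y')]$ as $\E[h(Y)]$ is also the right interpretation. The paper does not prove this lemma and only cites McDiarmid, and your argument is the standard bounded-differences martingale proof from that source.
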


We use the following concentration bounds to show the discrepancy bound in \cref{sec:all-sets}.

\begin{lemma}[Chernoff Bound] 
    \label{lem:bernsteins-inequality}
    Let~$X_1, \dotsc ,X_n \in \set{0,1}$ be independent random variables.
    Let~$X = \sum_{i \in [n]} X_i$ and~$\mu = \E[X]$.
    For any~$\delta \in [0,1]$, we have
    \begin{equation*}
        \Pr \big( (1-\delta) \mu \geq X \big) \leq \exp\left(-1/2 \cdot \delta^2\mu\right).
    \end{equation*}
\end{lemma}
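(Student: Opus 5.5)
This is the standard lower-tail Chernoff bound, and I would prove it by the exponential moment method. Let $p_i := \Pr(X_i = 1)$, so that $\mu = \sum_{i \in [n]} p_i$. First I dispose of the boundary cases. For $\delta = 0$ the right-hand side equals $1$, so there is nothing to show. For $\delta = 1$, independence gives
\begin{equation*}
\Pr(X \le 0) = \prod\nolimits_{i} (1-p_i) \le e^{-\mu} \le e^{-\mu/2}.
\end{equation*}
So from now on I assume $\delta \in (0,1)$.

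For any $\lambda > 0$, apply Markov's inequality to the nonnegative variable $e^{-\lambda X}$:
\begin{equation*}
\Pr\big(X \le (1-\delta)\mu\big) \le e^{\lambda(1-\delta)\mu} \cdot \E\big[e^{-\lambda X}\big].
\end{equation*}
By independence, $\E[e^{-\lambda X}] = \prod_i \big(1 + p_i(e^{-\lambda}-1)\big)$. Using $1+x \le e^x$ on each factor, this is at most $\exp\big(\mu(e^{-\lambda}-1)\big)$. Now choose $\lambda = -\ln(1-\delta) > 0$, so that $e^{-\lambda} = 1-\delta$. The bound becomes
\begin{equation*}
\exp\big(-\delta\mu - (1-\delta)\ln(1-\delta)\,\mu\big).
\end{equation*}

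It remains to prove the elementary inequality $(1-\delta)\ln(1-\delta) \ge -\delta + \delta^2/2$ for $\delta \in [0,1)$. Plugging it into the exponent yields $-\delta\mu - (1-\delta)\ln(1-\delta)\mu \le -\tfrac{1}{2}\delta^2\mu$, which is exactly the claim. To prove the inequality, set
\begin{equation*}
g(\delta) := (1-\delta)\ln(1-\delta) + \delta - \delta^2/2 .
\end{equation*}
Then $g(0) = 0$ and $g'(\delta) = -\ln(1-\delta) - 1 + 1 - \delta = -\ln(1-\delta) - \delta$. This derivative is nonnegative because $-\ln(1-\delta) = \sum_{j \ge 1} \delta^j/j \ge \delta$. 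Hence $g \ge 0$ on $[0,1)$.

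The only step requiring care is this calculus inequality; the remaining steps are routine.
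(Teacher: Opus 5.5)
Your proof is correct: the Markov step on $e^{-\lambda X}$, the bound $\E[e^{-\lambda X}] \le \exp(\mu(e^{-\lambda}-1))$, the choice $e^{-\lambda} = 1-\delta$, the calculus inequality $(1-\delta)\ln(1-\delta) \ge -\delta + \delta^2/2$, and your separate handling of $\delta \in \{0,1\}$ all check out. The paper gives no proof of this lemma and cites it as the standard lower-tail Chernoff bound (it is later applied with $\delta = 1/2$), so there is no route to compare against; your exponential-moment argument is the textbook proof behind that citation.
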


We call a sequence $(M_t)_{t\in\N}$ of random variables a \emph{discrete-time martingale} if we have $\E[M_t]< \infty$ and $\E[M_{t+1}\,\lvert\, M_0,...,M_t]=X_t$ for all~$t\in\N$.
\begin{lemma}[Azuma's Inequality]\label{lem:azumas-inequality}
    Suppose $(M_j)_{j\geq0}$ is a martingale and $\abs{M_j-M_{j-1}}\leq c_j$ almost surely for all $j\geq1$. Then, for all positive integers $N$ and all positive reals $\eps$,
    \begin{equation*}
        \Pr(\abs{X_N-X_0}\geq\eps)\leq 2\exp\left(-\frac{\eps^2}{2\sum_{j=1}^Nc_j^2}\right).
    \end{equation*}
\end{lemma}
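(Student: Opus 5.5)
We prove the tail bound via exponential moments, a Chernoff-type argument adapted to martingales. (The statement writes $X_N$ and $X_0$; we read these as $M_N$ and $M_0$.) Write $D_j := M_j - M_{j-1}$ for the martingale differences, so that $M_N - M_0 = \sum_{j=1}^N D_j$. Let $\mathcal{G}_{j-1}$ denote the information $M_0,\dotsc,M_{j-1}$. The martingale property gives $\E[D_j \mid \mathcal{G}_{j-1}] = 0$, and the hypothesis gives $\abs{D_j} \le c_j$ almost surely. The plan is to bound the one-sided tail $\Pr(M_N - M_0 \ge \eps)$ by $\exp(-\eps^2/(2\sum_j c_j^2))$. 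Applying the same argument to the martingale $(-M_j)_{j\ge 0}$, which has the same difference bounds, gives the other tail, and a union bound then yields the factor $2$.

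\textbf{Step 1: a conditional Hoeffding lemma.} For every $\lambda > 0$ we show
\begin{equation*}
\E[e^{\lambda D_j} \mid \mathcal{G}_{j-1}] \le e^{\lambda^2 c_j^2/2}.
\end{equation*}
Since $x \mapsto e^{\lambda x}$ is convex, on $[-c_j, c_j]$ it lies below its chord:
\begin{equation*}
e^{\lambda x} \le \tfrac{c_j - x}{2c_j} e^{-\lambda c_j} + \tfrac{c_j + x}{2c_j} e^{\lambda c_j}.
\end{equation*}
Taking conditional expectations and using $\E[D_j \mid \mathcal{G}_{j-1}] = 0$ gives $\E[e^{\lambda D_j} \mid \mathcal{G}_{j-1}] \le \cosh(\lambda c_j)$. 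Comparing Taylor series termwise, using $(2n)! \ge 2^n n!$, gives $\cosh(y) \le e^{y^2/2}$, which proves the claim.

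\textbf{Step 2: peeling off one difference at a time.} By the tower property, and because $e^{\lambda(M_{N-1}-M_0)}$ is determined by $\mathcal{G}_{N-1}$,
\begin{equation*}
\E\big[e^{\lambda(M_N - M_0)}\big] = \E\big[e^{\lambda(M_{N-1}-M_0)} \, \E[e^{\lambda D_N} \mid \mathcal{G}_{N-1}]\big] \le e^{\lambda^2 c_N^2/2}\, \E\big[e^{\lambda(M_{N-1}-M_0)}\big].
\end{equation*}
Inducting on $N$ yields $\E[e^{\lambda(M_N-M_0)}] \le \exp(\lambda^2 \sum_{j=1}^N c_j^2/2)$.

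\textbf{Step 3: Markov's inequality and optimizing $\lambda$.} By Markov,
\begin{equation*}
\Pr(M_N - M_0 \ge \eps) \le e^{-\lambda\eps}\, \E[e^{\lambda(M_N-M_0)}] \le \exp\big(-\lambda \eps + \lambda^2 \textstyle\sum_j c_j^2/2\big).
\end{equation*}
Choosing $\lambda = \eps / \sum_j c_j^2$ makes the right-hand side $\exp(-\eps^2/(2\sum_j c_j^2))$. Combining this with the symmetric lower tail completes the proof.

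The main obstacle is Step 1, namely obtaining the sub-Gaussian bound on the conditional moment generating function using only boundedness and zero conditional mean. The chord (convexity) trick reduces this to the elementary inequality $\cosh y \le e^{y^2/2}$. Step 2 additionally requires care that conditioning is on the natural filtration of $(M_j)$, so that the martingale property applies as stated.
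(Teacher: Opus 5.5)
Your proof is correct. It is the standard exponential-moment proof: the chord bound and $\cosh y \le e^{y^2/2}$ give the conditional Hoeffding lemma, the tower property iterates it, and Markov's inequality with the optimal $\lambda$ plus a union bound over the two tails finishes. Reading $X_N, X_0$ as $M_N, M_0$ correctly fixes a typo in the statement.

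The paper gives no proof of this lemma; it quotes it as a classical tool and only applies it in the proof of \cref{lem:all-sets-small-set-discrepancy}, so there is no argument to compare against. You only gloss over two trivial degenerate cases. If $c_j = 0$, the chord formula divides by zero, but then $D_j = 0$ almost surely and Step~1 holds trivially. If $\sum_j c_j^2 = 0$, the choice of $\lambda$ is undefined, but then $M_N = M_0$ almost surely and the probability is zero.
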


\section{Sparse Coverage Functions with Small Sets}
\label{sec:small-sets}

In this section, we show the following theorem. 

\SmallSets*

We show \cref{thm:small-sets-discrepancy} in two steps: First, we construct a fractional $k$-coloring~$Y $ with at most~$\fracEntries$ fractional elements and zero discrepancy.
Second, we randomly round the remaining fractional elements to an integral $k$-coloring~$\chi$ without significantly increasing the discrepancy.


\subsection{Computing a k-Coloring with Few Fractional Elements}
\label{subsec:small-sets-sparse-frac-sol}

Throughout this subsection, we show how to compute a fractional $k$-coloring with few fractional elements by proving the following lemma.

\begin{lemma}
    \label{lem:small-sets-sparse-frac-sol}
    Let~$\F = \{f_1, \dotsc , f_m: \set{0,1}^n \rightarrow \R \}$ be a $t$-sparse family of coverage functions where~$\abs{S} \le s$ for all~$S \in \Ss$.
    In polynomial time, we can compute a fractional $k$-coloring $Y : [n] \rightarrow \Delta_{k}$ with at most $\fracEntries$ fractional elements such that $F_i(Y_{\ell}) = F_i(Y_{\ell'})$ for all~$i \in [m]$ and~$\ell, \ell' \in [k]$.
\end{lemma}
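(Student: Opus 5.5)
We start from the uniform fractional coloring $Y^{(0)}_j = (1/k,\dotsc,1/k)$ for all $j\in[n]$. It has zero discrepancy trivially, since $Y^{(0)}_\ell = Y^{(0)}_{\ell'}$ for all colors. We then iteratively make elements integral while keeping every value $F_i(Y_\ell)$ unchanged. This preserves $F_i(Y_\ell)=F_i(Y_{\ell'})$ throughout, because all these values start equal.

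In each round, let $R$ be the set of currently fractional elements. We choose a set $D\subseteq R$ with $|S\cap D|\le 1$ for every $S\in\Ss$. By \cref{lem:restricted-multilinear-extension-is-linear}, every $F_i\vert_D$ is then linear in the variables $X_{\ell d}$, $d\in D$, with the entries outside $D$ held fixed. So RoundingLP for this $D$ is a genuine linear program, and the current coloring is a feasible point. We compute a basic feasible solution $X'$. Apart from non-negativity it has $mk+|D|$ equality constraints, so $X'$ has at most $mk+|D|$ nonzero entries. Each $d\in D$ needs at least one nonzero entry, and each fractional $d$ needs at least two. Hence at most $mk$ elements of $D$ remain fractional, and at least $|D|-mk$ of them become integral. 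Value preservation guarantees that $F_i(Y_\ell)$ is unchanged for all $i,\ell$. Elements that are already integral never change, since we only ever pick $D\subseteq R$.

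The main obstacle is making sure a large $D$ exists, so that progress continues until few fractional elements remain. We build $D$ greedily, using the sparsity and the bound on set sizes. Consider the conflict graph on $R$ in which $j,j'$ are adjacent if they share a set $S\in\Ss$. Each $j$ lies in at most $t$ sets, each of size at most $s$, so its degree is at most $t(s-1)$. A greedy independent set therefore has size $|D|\ge |R|/(t(s-1)+1)\ge |R|/(ts)$, and it is exactly a set with $|S\cap D|\le 1$ for all $S$. As long as $|R|> ts(mk+1)$, we get $|D|>mk$, so at least one element becomes integral and the number of fractional elements strictly decreases.

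After at most $n$ rounds, each solvable in polynomial time by LP, we have at most $ts(mk+1)$ fractional elements. This is the bound we aim for in $\fracEntries$, up to the exact constant used there; if that quantity is, say, $2mkts$, it follows since $mk+1\le 2mk$. One subtlety to verify is that the linearity lemma gives $F_i\vert_D$ as an affine function of the $D$-variables, with coefficients depending on the fixed outside coordinates. That is all RoundingLP needs. Evaluating these coefficients is polynomial, because the coverage structure lets us compute the probability that each universe element is covered.
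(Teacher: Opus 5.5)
Your proposal is correct and is essentially the paper's proof. You start from the uniform coloring, greedily pick an independent set $D$ of size at least $|R|/(ts)$ in the conflict graph, and take a vertex solution of RoundingLP so that at least $|D|-mk\ge 1$ elements of $D$ become integral while every $F_i(Y_\ell)$ is preserved, repeating until this no longer applies.

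The one point to tighten is your stopping threshold. Since $|D|\ge |R|/(ts)$, already $|R|>mkts$ gives $|D|>mk$, so you end with at most $mkts$ fractional elements. That is exactly the lemma's bound, as you can check by combining \cref{lem:small-sets-randomized-rounding-discrepancy} with \cref{thm:small-sets-discrepancy}. The $ts(mk+1)$ you wrote would miss it by an additive $ts$, so you need neither that looser bound nor any hedging about the constant.
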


We prove \cref{lem:small-sets-sparse-frac-sol} as follows.
Starting out with a fractional $k$-coloring $Y : [n] \rightarrow \Delta_{k}$ where~$Y_{\ell j} = 1/k$ for all $j \in [n]$ and $\ell \in [k]$, we iteratively increase the number of integrally colored elements until we have at most~$\fracEntries$ fractional elements.
Throughout, let~$Z \subseteq [n]$ denote the set of fractional elements in~$Y$, where~$Z = [n]$ initially.
In the following lemma, we show how to compute a large independent set in the dependency graph of~$\F$.

\begin{lemma}
    \label{lem:small-sets-frac-independent-set}
    For some~$Z \subseteq [n]$, let $G = (Z,E)$ denote the dependency graph of~$\F$ where~$E = \set{\set{u,v} \,|\, \exists S \in \Ss:  u,v \in S \cap Z, u \neq v}$. 
    In polynomial time, we can compute an independent set~$D \subseteq Z$ in~$G$ such that~$|D| \ge \abs{Z}/ts$.
\end{lemma}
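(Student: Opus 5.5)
The plan is to bound the maximum degree of $G$ and then run the greedy independent set algorithm. A graph with maximum degree $\Delta$ has an independent set of size at least $|V|/(\Delta+1)$, and greedy finds one in polynomial time. So it suffices to show $\Delta+1 \le ts$.

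\textbf{Degree bound.} Fix $v \in Z$. Every neighbour $u$ of $v$ lies in some set $S \in \Ss$ with $v \in S$, and $u \in S \cap Z \setminus \{v\}$. By $t$-sparsity, $v$ belongs to at most $t$ sets of $\Ss$. Each such set has $|S| \le s$, so it contributes at most $s-1$ neighbours other than $v$. Hence
\begin{equation*}
    \deg_G(v) \le t(s-1) \le ts - 1.
\end{equation*}
The second inequality uses $t \ge 1$; if $t=0$, there are no edges and $D=Z$ trivially works. Therefore $\Delta + 1 \le ts$.

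\textbf{Greedy construction.} Repeatedly pick any remaining vertex $v$ of $Z$, add it to $D$, and delete $v$ together with all its neighbours in $G$. Each step removes at most $\deg_G(v)+1 \le ts$ vertices, so the procedure runs for at least $|Z|/(ts)$ steps. The resulting set $D$ is independent by construction, and $|D| \ge |Z|/ts$.

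\textbf{Running time.} Building $G$ takes time polynomial in $n$ and $|\Ss|$: for each $S \in \Ss$, list all pairs in $S \cap Z$. The greedy loop is linear in the size of $G$. There is no real obstacle here; the only point needing care is that the degree count goes through the at most $t$ sets containing $v$, and it is sparsity that makes this count small.
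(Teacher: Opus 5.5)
Your proof is correct and follows the paper's argument: bound the maximum degree of $G$ using sparsity and the set-size bound, then run the greedy ``pick a vertex, delete it and its neighbours'' procedure. The paper picks a minimum-degree vertex, which this bound does not need. Your degree bound $t(s-1)$ is the accurate one; the paper writes $(t-1)(s-1)$, which undercounts, since $v$ itself may lie in $t$ sets. Either way, each step removes at most $ts$ vertices, so the conclusion $|D| \ge |Z|/ts$ is unaffected.
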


\begin{proof}
    In polynomial time, we construct~$G$ and compute~$D$ using a standard greedy algorithm.
    Initialize~$D=\emptyset$.
    While~$Z\neq\emptyset$, we repeat the following steps.
    \begin{enumerate}
        \item Pick any vertex $z\in Z$ with the smallest degree.
        \item Set $D = D \cup \set{z}$ and remove $z$ and all its neighbors from $Z$.
    \end{enumerate}
    Due to the second step, it holds that~$\set{d_1,d_2} \not \in E$ for all $d_1,d_2 \in D$.
    By definition of size~$s$ and  sparsity~$t$, it holds that~$\deg(z) \le (t - 1)(s-1)$ for all vertices~$z \in Z$.
    Hence, we delete~$z$ and at most~$(t - 1)(s-1)$ dependent vertices from~$Z$ in each iteration.
    Thus, it follows that~$\abs{D}\geq \abs{Z}/ts$.
\end{proof}

In words, no two elements in~$D$ appear in the same set~$S \in \Ss$.
In the following, we explain how to exploit the independence of the elements of~$D$ to compute a fractional $k$-coloring~$Y'$ with at most~$\abs{Z}-1$ many fractional elements while maintaining~$F_i (Y_{\ell}) = F_i (Y'_{\ell})$ for all~$i \in [m]$ and~$\ell\in [k]$ with \cref{lem:small-sets-round-multilinear-extension}.
The key insight is that restricting the multilinear extension~$F_i \vert_{D}$ to~$D$ and fixing all elements~$\bar{d} \in \conj{ D}$ to their value of~$Y_{\bar{d}}$ results in~$F_i \vert_{D}$ becoming \emph{linear} (see \cref{def:restricted-multilinear-extension,lem:restricted-multilinear-extension-is-linear}).
Therefore, the problem of coloring the elements of~$D$ while ensuring that~$F_i\vert_{D}(Y'_\ell) = F_i\vert_{D}(Y_{\ell})$ for all~$i \in [m]$ and~$\ell \in [k]$ can be modeled as a linear program.
In the following lemma, we show that standard LP techniques result in at least one more integrally colored element in~$D$.

\begin{lemma}
    \label{lem:small-sets-round-multilinear-extension}
    Let~$Y : [n] \rightarrow \Delta_{k}$ be a fractional $k$-coloring and~$Z \subseteq [n]$ the set of fractional elements in~$Y$.
    Let~$D \subseteq Z$ be an independent set of size~$\abs{D} > mk$.
    In polynomial time, we can compute a fractional $k$-coloring $Y' : [n] \rightarrow \Delta_{k}$ with at most $\abs{Z} - 1$ fractional elements and~$F_i(Y_{\ell}) = F_i(Y'_{\ell})$ for all~$i \in [m]$ and~$\ell \in [k]$. 
\end{lemma}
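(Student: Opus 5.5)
Since $D$ is an independent set in the dependency graph, we have $\abs{S \cap D} \le 1$ for every $S \in \Ss$. By \cref{lem:restricted-multilinear-extension-is-linear}, each restricted multilinear extension $F_i\vert_D$ is therefore linear (more precisely affine) in the variables $(X_{\ell d})_{d \in D}$ once all elements of $\conj{D}$ are fixed to their values in $Y$. Consequently, RoundingLP with $X = Y$ is a genuine linear program. It has variables $X'_{\ell d}$ for $d \in D$, $\ell \in [k]$, that is, $k\abs{D}$ variables, and $mk + \abs{D}$ equality constraints besides non-negativity. The current restriction $Y\vert_D$ is a feasible point, so the polytope is nonempty and bounded (it lies in $\Delta_k^{D}$). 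In polynomial time we can compute a vertex $X'$ of it, for instance by moving from $Y\vert_D$ along directions in the kernel of the equality system until a new non-negativity constraint becomes tight, and repeating.

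Next we count fractional entries at the vertex. A vertex has at least $k\abs{D}$ tight linearly independent constraints, so at most $mk + \abs{D}$ variables are nonzero. Every $d \in D$ needs at least one nonzero entry. Each $d$ that stays fractional needs at least two. Let $p$ be the number of fractional elements of $D$ after rounding. Then $\abs{D} + p \le mk + \abs{D}$, so $p \le mk < \abs{D}$. Hence at least one element of $D$ becomes integral, meaning one entry equals $1$ and the rest equal $0$.

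Define $Y'$ to agree with $Y$ outside $D$ and with $X'$ on $D$. Elements outside $Z$ were integral and remain so, and at least one element of $D \subseteq Z$ becomes integral, so $Y'$ has at most $\abs{Z}-1$ fractional elements. $Y'$ is a valid fractional $k$-coloring by the Valid Coloring and Non-Negativity constraints. For value preservation, each $F_i(Y'_\ell)$ equals $F_i\vert_D(X'_\ell)$ with $\conj{D}$ fixed at $Y$. This is also how $F_i(Y_\ell) = F_i\vert_D(Y_\ell\vert_D)$ is evaluated, and the Value Preservation constraints give equality for all $i$ and $\ell$.

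The main obstacle is the vertex-counting step. One must check that the linearity from \cref{lem:restricted-multilinear-extension-is-linear} applies to every color vector $Y_\ell$ simultaneously. This holds because the lemma is stated for an arbitrary fractional coloring and $\conj{D}$ is fixed identically for all $\ell$. One must also handle the affine constants properly: they can be moved to the right-hand side, so the rank bound $mk + \abs{D}$ is unaffected. Polynomial time follows because the coefficients of $F_i\vert_D$ are products of $(1 - Y_{\ell j})$ over the sets in $\Ss_i$, and these are computable in time polynomial in the size of the universe.
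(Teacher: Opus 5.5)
Your proposal is correct and follows essentially the same route as the paper's proof. Independence of $D$ gives $|S \cap D| \le 1$, so each $F_i\vert_D$ is affine and RoundingLP is a genuine LP with $mk+|D|$ equality constraints; a vertex then has at most $mk+|D|$ nonzero variables, so at most $mk<|D|$ elements of $D$ stay fractional, and value preservation follows because the coordinates outside $D$ are held fixed. Your extra remarks (moving the affine constants to the right-hand side, the explicit kernel walk to a vertex, and computing the coefficients) only fill in details the paper covers by ``standard LP techniques''.
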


\begin{proof}
    In the following, we show how to obtain~$Y'$ such that
    \begin{alignat}{3}
        & \forall \bar{d} \in \conj{D} \qquad \qquad \qquad & Y'_{\bar{d}} &= Y_{\bar{d}}, \label{eq:small-sets-fixed-fractional-elements}\\
        & \forall i \in [m], \ell \in [k] \qquad \qquad \qquad & F^s_i \big(Y'_\ell \big) &= F^s_i \big(Y_{\ell} \big) \label{eq:small-sets-value-preservation} \\
        & \exists d \in  D \qquad \qquad \qquad & Y'_{d} &\in \set{0,1}^{k} \text{ with } \sum\nolimits_{\ell \in [k]} Y'_{d \ell} = 1. \label{eq:small-sets-at-least-one-integral-element}
    \end{alignat}
    To satisfy \cref{eq:small-sets-fixed-fractional-elements}, we set~$Y'_{\bar{d}} = Y_{\bar{d}}$ for all~$\bar{d} \in \conj{D}$.
    For all~$i \in [m]$, let~$F_i \vert_{D}$ denote the restricted multilinear extension \wrt~$Y$.
    Then we use RoundingLP to compute a fractional coloring~$(Y'_{d})_{d \in D}$ such that \cref{eq:small-sets-value-preservation,eq:small-sets-at-least-one-integral-element} hold.
    Due to~$\abs{D} > mk$, we have more variables than constraints, \ie we have~$\abs{D} + mk < \abs{D} \cdot k$.
    Using a standard LP algorithm, we obtain a vertex solution~$(Y'_{d})_{d \in D}$ to RoundingLP with at most~$ \abs{D} + mk$ many non-zero variables in polynomial time. 
    By the valid coloring constraints, any element~$d \in  D$ must have at least one non-zero variable.
    If~$d$ is fractionally colored, then at least two variables must be non-zero.
    Hence, there can be at most~$mk$ many elements that have more than one non-zero variable.
    As a consequence, at least $\abs{D} - mk \ge 1$ elements of~$D$ are integrally colored and \cref{eq:small-sets-at-least-one-integral-element} is satisfied.
    
    Using \cref{def:restricted-multilinear-extension}, it holds that~$F_i \vert_{ D}(Y_{\ell}) = F_i (Y_{\ell})$
    for a fixed fractional $k$-coloring~$Y_{\ell}$ and all~$i \in [m]$ and~$\ell \in [k]$.
    Together with the value preservation constraints of RoundingLP, we have that \cref{eq:small-sets-value-preservation} holds.
\end{proof}

We repeatedly reduce the number of fractional elements using \cref{lem:small-sets-frac-independent-set,lem:small-sets-round-multilinear-extension} until we can no longer find an independent set~$ D$ of size~$\abs{D} > mk$.
At that point, the resulting $k$-coloring has at most~$\fracEntries$ fractional elements.
We are now in the position to prove \cref{lem:small-sets-sparse-frac-sol}.

\begin{proof}[Proof of \cref{lem:small-sets-sparse-frac-sol}]
    Let~$Y : [n] \rightarrow \Delta_{k}$ denote a fractional $k$-coloring with~$Y_{j\ell} = 1/k$ for all $j \in [n]$ and $\ell \in [k]$.
    Due to the choice of~$Y$, we have zero discrepancy
    \begin{equation}
        \label{eq:small-sets-zero-discrepancy}
        F_i (Y_{\ell}) = F_i(Y_{\ell'})
    \end{equation}
    for all~$i \in [m]$ and~$\ell, \ell' \in [k]$.
    Let~$Z \subseteq [n]$ be the set of fractional elements in~$Y$, initially we have~$Z = [n]$.
    In order to compute a fractional $k$-coloring $Y^*$ with at most $\fracEntries$ fractional elements such that \cref{eq:small-sets-zero-discrepancy} holds, we repeat the two following steps.
    First, we find an independent set~$D \subseteq Z$ of size~$\abs{ D} \ge \abs{Z}/ts$ with \cref{lem:small-sets-frac-independent-set}.
    Second, we compute a fractional $k$-coloring~$Y'$ with at most~$\abs{Z} -1$ many fractional elements while maintaining
    \begin{equation}
        \label{eq:small-sets-zero-discrepancy-in-each-iteration}
        F_i (Y'_{\ell}) = F_i (Y_{\ell})
    \end{equation}
    for all~$i \in [m]$ and~$\ell\in [k]$ using \cref{lem:small-sets-round-multilinear-extension}.
    Then we pick~$Y^*$ to be the $k$-coloring that is the result of repeating these steps until there no longer exists a set~$D$ with~$\abs{D} > mk$.
    At this point, $Y^*$ has at most~$\fracEntries$ fractional elements.
    From \cref{eq:small-sets-zero-discrepancy,eq:small-sets-zero-discrepancy-in-each-iteration}, it holds that
    \begin{equation*}
        F_i (Y'_{\ell}) 
        = F_i (Y_{\ell})
        = F_i(Y_{\ell'}) 
        = F_i (Y'_{\ell'})
    \end{equation*}
    for all~$i \in [m]$ and~$\ell, \ell' \in [k]$.
    Consequently, maintaining \cref{eq:small-sets-zero-discrepancy-in-each-iteration} throughout the algorithm results in~$F_i (Y^*_{\ell}) = F_i(Y^*_{\ell'})$ for all~$i \in [m]$ and~$\ell, \ell' \in [k]$.
\end{proof}

    
\subsection{Rounding a k-Coloring with Few Fractional Elements}
\label{subsec:small-sets-randomized-rounding}

Based on the fractional $k$-coloring computed in \cref{subsec:small-sets-sparse-frac-sol}, we show that a simple randomized rounding scheme, similar to \cite{DupreTF25,HollenderMMS26}, yields a proper $k$-coloring of small discrepancy.

\begin{restatable}{lemma}{DiscrepancyAnalysis}
    \label{lem:small-sets-randomized-rounding-discrepancy}
    Let $\F = \{f_1, \dotsc , f_m: \set{0,1}^n \rightarrow \R \}$ be a $t$-sparse family of coverage functions.
    Let~$Y : [n] \rightarrow \Delta_{k}$ be a fractional $k$-coloring such that~$F_i(Y_{\ell}) = F_i(Y_{\ell'})$ for all $i \in [m]$ and~$\ell, \ell' \in [k]$ and~$Z \subseteq [n]$ the set of fractional elements in~$Y$.
     In polynomial time, we can compute a $k$-coloring~$\chi : [n] \rightarrow [k]$ such that with probability larger than~$1/2$ holds that
    \begin{equation*}
        \disc_\chi(\F,k) < \sqrt{2 \abs{Z}t^2 \cdot (1 + \ln (2mk))}.
    \end{equation*}
\end{restatable}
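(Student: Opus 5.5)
We round independently: for each fractional element $j \in Z$, we sample its color $\chi_j = \ell$ with probability $Y_{j\ell}$, independently across $j$. For every integral element $j \notin Z$, we set $\chi_j$ to the unique color with $Y_{j\ell} = 1$. The randomness is then a vector of independent random variables indexed by $Z$. To match the binary form of \cref{lem:mcDiarmid-inequality}, we encode each categorical choice as a single independent coordinate. The bounded-differences argument below works verbatim for independent coordinates taking values in $[k]$, since McDiarmid's inequality holds for arbitrary independent coordinates.

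Fix $i \in [m]$ and $\ell \in [k]$, and let $h_{i\ell}(\chi) := f_i(\chi^{-1}(\ell))$. By \cref{def:multilinear-extension}, each $j$ lands in color class $\ell$ independently with probability $Y_{j\ell}$, so the set $\chi^{-1}(\ell)$ is distributed exactly as $S \sim Y_\ell$. Hence $\E[h_{i\ell}] = F_i(Y_\ell)$. By assumption, this value is the same common value $c_i$ for all $\ell$.

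Next we bound the differences. Changing the color of one element $j \in Z$ alters $\chi^{-1}(\ell)$ by adding or removing $j$. Item $j$ lies in at most $t$ sets of $\Ss$, so $f_i$ changes by at most $t$. Therefore $h_{i\ell}$ satisfies the bounded-difference condition with constant $t$ over $q = |Z|$ coordinates. \cref{lem:mcDiarmid-inequality} gives
\[
\Pr\big(|h_{i\ell} - c_i| \ge r\big) \le 2\exp\!\left(-\frac{2r^2}{|Z|t^2}\right).
\]

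We take a union bound over all $mk$ pairs $(i,\ell)$. Choose $r = \tfrac12\sqrt{2|Z|t^2(1+\ln(2mk))}$, so that $2r^2/(|Z|t^2) = 1+\ln(2mk)$. Then the total failure probability is at most
\[
2mk \cdot e^{-1}/(2mk) = e^{-1} < 1/2.
\]
On the complementary event, every pair satisfies $|f_i(\chi^{-1}(\ell)) - c_i| < r$. By the triangle inequality, for all $\ell, \ell'$ we get
\[
|f_i(\chi^{-1}(\ell)) - f_i(\chi^{-1}(\ell'))| < 2r = \sqrt{2|Z|t^2(1+\ln(2mk))}.
\]
This is the claimed bound.

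The main point to check carefully is the bounded-difference constant and the choice of coordinates. Recoloring $j$ from $\ell_1$ to $\ell_2$ affects the classes $\ell_1$ and $\ell_2$. For a fixed function $h_{i\ell}$, however, only one class matters, so the constant is $t$ and not $2t$. The sampling itself is clearly polynomial time.
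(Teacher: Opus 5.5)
Your proposal is correct and follows the paper's proof step for step: independent rounding of the elements of $Z$, McDiarmid's inequality with bounded-difference constant $t$ for each pair $(i,\ell)$, the same choice $r=\sqrt{|Z|t^2(1+\ln(2mk))/2}$, a union bound over the $mk$ pairs, and the triangle inequality through the common value $F_i(Y_\ell)$. The only difference is cosmetic. The paper applies the binary form of McDiarmid directly, noting that for fixed $\ell$ the value $f_i(\chi^{-1}(\ell))$ depends only on the independent indicators $\chi_{\ell z}\in\{0,1\}$, $z\in Z$. This makes your appeal to the $[k]$-valued version unnecessary, and likewise your remark about encoding a $k$-way choice as one binary coordinate, which is not literally possible for $k>2$.
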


We defer the proof to \cref{sec:appendix}.
Together, \cref{lem:small-sets-sparse-frac-sol,lem:small-sets-randomized-rounding-discrepancy} show \cref{thm:small-sets-discrepancy}.

\section{Sparse Coverage Functions with Big Sets}
\label{sec:big-sets}
In this section, we show the following theorem. 

\BigSets*

There are two key insights to proving \cref{thm:big-sets-discrepancy}.
First, any $k$-coloring~$\chi$ where all sets~$S \in \Ss$ contain at least one element of each color, \ie $1_S \cdot \chi_\ell \ge 1$ for all $\ell \in [k]$, has zero discrepancy.
Any set~$S$ with that property is called a \emph{$k$-rainbow set}.

\begin{restatable}[$k$-Rainbow Set]{definition}{RainbowSet}
    \label{def:rainbow-sets}
    Let~$\chi : [n] \rightarrow [k]$ be a $k$-coloring. A set~$S\subseteq[n]$ is called a \emph{$k$-rainbow set} with respect to $\chi$, if $\One_S \cdot \chi_\ell \ge 1$ for all $\ell \in [k]$.
    If a subset~$P \subseteq S$ is a $k$-rainbow set with respect to~$\chi$, then so is~$S$. 
\end{restatable}

Second, consider a $k$-coloring~$\chi$ where each element is randomly colored in one of the~$k$ colors.
Due to the sparsity~$t$, the random coloring of all elements in~$S \in \Ss$ is only dependent on a few other sets.
Since the sets are large in comparison to the number of dependencies, the probability of a set being a rainbow set in a random coloring is high and we can apply the \emph{Lov\'{a}sz Local Lemma (LLL)}.

\begin{lemma}[Symmetric Constructive Lov\'asz Local Lemma \cite{MoserT10, sason2026lovaszlocallemmafundamentals}]
    \label{lem:big-sets-LLL}
    Let $\mathcal X$ be a finite set of mutually independent random binary variables in a probability space.
    Let $\A$ be a finite set of events determined by these variables. 
    Let $p\in [0,1]$ and $d\in\mathbb N$ such that
    $\Pr(A) \le p$ for all~$A\in \A$ and 
    every $A\in \A$ depends on at most $d$ other events $A'\in \A$. 

    If $ep(d+1) \le 1$, then there is a non-zero probability that none of the bad events occur.
    Moreover, there exists a randomized algorithm that computes such an realization of random variables in expected time polynomial in $|\mathcal X|$ and $|\A|$.
\end{lemma}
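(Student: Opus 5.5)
This statement is the Moser--Tardos form of the symmetric local lemma, so I would follow the argument of \cite{MoserT10} rather than build anything new. The plan has two parts: the existence claim, which is the classical LLL, and the algorithmic claim, which follows from analyzing the resampling algorithm. In fact the second part implies the first, since an algorithm that terminates with probability one and outputs an assignment avoiding all events shows that such an assignment exists. I would therefore concentrate on the algorithm.

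The algorithm is as follows.
\begin{enumerate}
    \item Sample every variable in $\mathcal X$ independently from its distribution.
    \item While some event $A\in\A$ holds, pick one such $A$ by any fixed rule and resample only the variables $\mathrm{vbl}(A)$ that determine $A$.
\end{enumerate}
Each step takes time polynomial in $|\mathcal X|$ and $|\A|$, since checking all events and resampling a set of variables is polynomial. So it suffices to bound the expected number of resampling steps by $\sum_{A\in\A} \frac{1}{d}$, or by any similar polynomial quantity. For this I would use the dependency graph on $\A$ in which $A$ and $A'$ are adjacent iff $\mathrm{vbl}(A)\cap\mathrm{vbl}(A')\neq\emptyset$; by hypothesis every vertex has degree at most $d$.

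The core step is the witness-tree argument.
\begin{itemize}
    \item For each resampling step, build its witness tree backwards in time. The root is labelled by the resampled event. Going back through the log, each earlier resampled event $A'$ is attached as a child of the deepest vertex whose label is equal to or adjacent to $A'$.
    \item Distinct steps produce distinct witness trees.
    \item The key coupling claim is that a fixed \emph{proper} tree $\tau$ occurs in the log with probability at most $\prod_{v\in\tau}\Pr(A_{[v]})\le p^{|\tau|}$. To prove it, fix for each variable an infinite table of independent samples, so that the $j$-th resampling of a variable reads the next entry. Each vertex of $\tau$ then determines which table entries its event must have been true on, and these entries are distinct across vertices.
    \item The expected number of times $A$ is resampled is therefore at most the sum of $p^{|\tau|}$ over proper trees rooted at $A$. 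Compare with a Galton--Watson branching process in which each vertex has at most $d+1$ potential children (its own label and its $d$ neighbours), each born independently with probability $x=1/(d+1)$. Under this comparison each tree contributes at most $\frac{x}{1-x}\prod_{v}x'_{[v]}$, where $x'=x(1-x)^{d}$.
    \item The condition $ep(d+1)\le 1$ gives $p\le \frac{1}{d+1}\bigl(1-\frac{1}{d+1}\bigr)^{d}=x'$, using $(1-\frac{1}{d+1})^{d}\ge 1/e$. Hence the expected number of resamplings of $A$ is at most $x/(1-x)=1/d$, and the expected total number of steps is at most $|\A|/d$, which is polynomial.
\end{itemize}

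The main obstacle is the coupling claim: showing that a given proper witness tree appears with probability at most the product of its events' probabilities. The care needed is in arguing that the table entries consulted at different vertices of $\tau$ are distinct and independent. I would argue this by processing the vertices of $\tau$ in order of decreasing depth: for each variable, the number of tree vertices below $v$ whose events share that variable with $v$ equals the number of earlier resamplings of that variable, and this pins down which table entries $v$ reads.

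The remaining pieces are the degenerate case $d=0$, where $p\le 1/e<1$ and all events are mutually independent, so the claim is immediate, and the fact that the variables here are finite-valued rather than binary, which changes nothing in the argument.
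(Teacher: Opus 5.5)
Your outline is the Moser--Tardos witness-tree proof. That is exactly what the paper relies on: it gives no argument of its own for this lemma and only cites \cite{MoserT10}. The overall structure is correct: the resampling algorithm, one distinct witness tree per resampling step, the table coupling giving $\Pr[\tau\text{ occurs}]\le\prod_{v}\Pr([v])$, a Galton--Watson comparison, $p\le x(1-x)^d$ from $ep(d+1)\le 1$, and the separate case $d=0$. However, the one formula you write for the branching-process step is misstated, and that is the step that actually bounds the sum over trees.

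Here is the corrected version. Consider the Galton--Watson process in which every vertex $v$ independently receives, for each label $B$ in the inclusive neighbourhood of $[v]$, a child labelled $B$ with probability $x=1/(d+1)$. It produces exactly a fixed proper tree $\tau$ rooted at $A$ with probability $p_\tau=\frac{1-x}{x}\prod_{v\in\tau}x'_{[v]}$. Here $x'_B=x(1-x)^{|\Gamma(B)|}\ge x(1-x)^d\ge p$, and $\Gamma(B)$ denotes the set of neighbours of $B$. Therefore
\[
\sum_{\tau}p^{|\tau|}\;\le\;\sum_{\tau}\prod_{v\in\tau}x'_{[v]}\;=\;\frac{x}{1-x}\sum_{\tau}p_\tau\;\le\;\frac{x}{1-x}=\frac1d ,
\]
where the last inequality holds because the $p_\tau$ are probabilities of disjoint outcomes.

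Your sentence ``each tree contributes at most $\frac{x}{1-x}\prod_v x'_{[v]}$'' puts the factor $\frac{x}{1-x}$ on the wrong side. That termwise inequality does not follow from $p\le x'$. What makes the series over trees sum to $1/d$ is the identity $\prod_v x'_{[v]}=\frac{x}{1-x}p_\tau$, not a termwise bound.

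Three smaller points should also be made explicit. First, in the backward construction, an earlier step is skipped (not attached) when no current vertex lies in its inclusive neighbourhood. Second, every tree occurring in the log is proper; you need this because both your sum and the branching process only account for proper trees. Third, in the coupling, ``vertices below $v$'' must mean all vertices of strictly greater depth sharing the variable, not just descendants of $v$, since a dependent earlier event can be attached in a different branch.
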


Putting the two insights together, we show the following lemma as follows. 
For each set $S\in \Ss$, we define the bad event as $S$ not being a rainbow set in a random coloring and analyze its probability and dependencies. 
Applying LLL, we show that all bad events can be avoided and we can obtain a coloring where each set is a rainbow set. 
Importantly, we first replace each set~$S$ by arbitrary subset~$P$ of size~$s = \sLargeSets$. 
This step maintains sufficiently low probability for the bad events, but restricts the dependencies as a larger set may have higher dependencies.

\begin{lemma}
    \label{lem:big-sets-LLL-zero-discrepancy-k-colors}
    For any~$t,k \in \N$, let~$s= \sLargeSets$.
    Let~$\Ss$ be a collection of sets~$S \subseteq [n]$ such that $\abs{S}\geq s$ and each element~$j \in [n]$ appears in at most~$t$ different sets~$S \in \Ss$.
    In randomized polynomial time, we can compute a $k$-coloring $\chi:[n]\rightarrow[k]$ such that each~$S \in \Ss$ is a $k$-rainbow set.
\end{lemma}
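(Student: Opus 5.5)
We apply the Symmetric Constructive LLL (\cref{lem:big-sets-LLL}) to a uniformly random coloring, after first shrinking every set. For each $S \in \Ss$ we fix an arbitrary subset $P_S \subseteq S$ with $\abs{P_S} = s$; this is possible since $\abs{S} \ge s$. By \cref{def:rainbow-sets}, it suffices to find $\chi$ such that every $P_S$ is a $k$-rainbow set. The shrinking keeps the sparsity: each element $j$ lies in at most $t$ sets $P_S$, because $P_S \subseteq S$.

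The random variables are the colors $\chi(j)$, drawn independently and uniformly from $[k]$ for $j \in \bigcup_S P_S$. Each color is a $k$-valued variable rather than a binary one. To match the statement of the lemma we either encode each color by $\lceil \log_2 k\rceil$ binary variables (sampling uniformly via rejection), or simply note that Moser--Tardos works for arbitrary finite independent variables. For each $S$, the bad event $A_S$ is that $P_S$ misses some color. A union bound over the $k$ colors gives
\[
\Pr(A_S) \le k(1-1/k)^{s} \le k e^{-s/k} =: p.
\]
The event $A_S$ is determined only by the colors on $P_S$. So $A_S$ can depend on $A_{S'}$ only if $P_S \cap P_{S'} \neq \emptyset$. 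Each of the $s$ elements of $P_S$ lies in at most $t-1$ other sets, so $A_S$ depends on at most $d = s(t-1)$ other events. This bound is the reason for the shrinking step: without it, $d$ would grow with $\abs{S}$.

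It remains to check the LLL condition $e\,k e^{-s/k}(s(t-1)+1) \le 1$. Since $s(t-1)+1 \le st$, it suffices to show $ekst \le e^{s/k}$. This is the main obstacle, and it is where the concrete definition of $\sLargeSets$ enters. Presumably $s$ is of the form $c\,k\ln(kt)$ for a suitable constant $c$; I would first try $s = \lceil 4k\ln(4kt)\rceil$ or whatever value the paper fixes. Substituting $s = ck\ln(kt)$, the condition becomes
\[
e\,c\,k^2 t \ln(kt) \le (kt)^c.
\]
For $c \ge 3$ this holds, since $(kt)^3 \ge e\cdot 3\cdot k^2 t\ln(kt)$ whenever $kt \ge 2$ (using $\ln x \le x$). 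The edge case $kt = 1$ is trivial: then $k=1$ and every nonempty set is already $1$-rainbow. The only delicate work is this elementary inequality check for the exact constant.

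Finally, \cref{lem:big-sets-LLL} gives a randomized algorithm with expected polynomial running time in the number of variables ($\le n$, or $n\lceil\log_2 k\rceil$ binary ones) and in the number of events ($\abs{\Ss} \le nt$). It outputs a coloring $\chi$ avoiding all bad events, so every $P_S$, and hence every $S$, is a $k$-rainbow set.
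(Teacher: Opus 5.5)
Your proposal is the paper's proof step for step: shrink each $S$ to an $s$-subset, color uniformly at random, bound $\Pr(A_S)\le k e^{-s/k}$ by a union bound over the colors, bound the dependency degree by $s(t-1)$, and apply the symmetric constructive LLL. The paper fixes $s=6k\ln(tk)$, which gives $ep(d+1)\le 6e\ln(tk)/(t^5k^4)\le 1$.

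One correction to your final check. Your claim that every $c\ge 3$ works whenever $kt\ge 2$ is false. For $k=2$, $t=1$, $c=3$, your sufficient condition $eck^2t\ln(kt)\le (kt)^c$ reads $12e\ln 2\approx 22.6\le 8$. The bound $\ln x\le x$ only gives it when $t\ge 3e$. With the paper's constant $c=6$ the condition becomes $6e\ln(kt)\le k^4t^5$, and this does hold for all $kt\ge 2$.
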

\begin{proof}
    Let~$\Pp$ be a collection of sets that contains for each~$S \in \Ss$ an arbitrary~$P \subseteq S$ of size~$\abs{P} = s$. 
    Then $\Pp$ is also $t$-sparse.
    Let~$\chi:{[n]}\rightarrow[k]$ be a $k$-coloring where each element~$j \in [n]$ is randomly colored by setting $\chi_{j} = \ell$ with probability~$1/k$ for each~$\ell \in [k]$.
    For all $P \in \Pp$ and~$\ell \in [k]$, let $A(P)$ denote the event that $P$ is not a $k$-rainbow set with respect to~$\chi$ and let~$C_\ell(P)$ denote the event that~$1_P \cdot \chi_\ell < 1$.
    Then we have
    \begin{equation*}
        \Pr \big( C_\ell(P) \big)=\left(1-\tfrac{1}{k}\right)^s \le \exp\left(-\tfrac{s}{k}\right).
    \end{equation*}
    Notably, if~$P \in  \Pp$ is not a $k$-rainbow set, then~$1_P \cdot \chi_\ell < 1$ for at least one color $\ell \in[k]$. 
    Therefore, using a standard union bound, it holds for all $P \in \Pp$ that
    \begin{equation*}
        p := \Pr \big( A(P) \big)
        = \Pr \left(\bigcup\nolimits_{\ell \in[k]}  C_\ell(P) \right)
        \leq \sum\nolimits_{\ell \in[k]} \Pr \big( C_\ell(P) \big) 
        = k\cdot\left(1-\tfrac{1}{k}\right)^s 
        \le k\cdot\exp\left(-\tfrac{s}{k}\right).
    \end{equation*}
    Here, the last inequality follows from $1+x \le \exp(x)$, which holds for every $x\in\R$.

    Since each element $j \in [n]$ appears in at most $t$ different sets~$P \in \Pp$, each event $A(P)$ is only dependent on at most $d = s(t-1) \le ts - 1$ other events. 
    Thus, we have that
    \begin{align*}
        e p (d + 1) 
        &= etk \cdot \exp\left(-\frac{s}{k}\right) \cdot s 
        = 6etk^2\cdot \exp(-\ln(t^6k^6)) \cdot \ln(tk)
        = \frac{6e \cdot \ln(tk)}{t^5k^4}
        \leq 1,
    \end{align*}
    where the last inequality holds for all $k\geq2$ and $t\geq1$.
    Therefore, we can compute in polynomial time a $k$-coloring~$\chi'$ such that no bad event occurs. In particular, all sets~$P \in \Pp$ and therefore also all sets $S\in \Ss$ are $k$-rainbow sets with respect to~$\chi'$.
\end{proof}

\cref{thm:big-sets-discrepancy} is the immediate consequence of \cref{lem:big-sets-LLL-zero-discrepancy-k-colors}.

\begin{proof}[Proof of \cref{thm:big-sets-discrepancy}]
    By \cref{lem:big-sets-LLL-zero-discrepancy-k-colors}, we can compute a $k$-coloring $\chi:[n]\rightarrow[k]$ where each set~$S \in \Ss$ is a $k$-rainbow set in polynomial time.
    Hence, it holds that
    \begin{equation*}
        \min\{\One_{S} \cdot \chi_\ell , 1\} 
        \ge \min\{1 , 1\} 
        = 1
    \end{equation*}
    for all~$S \in \Ss$ and $\ell \in [k]$.
    Therefore, we have $f_i(\chi_\ell) = f_i(\chi_{\ell'})$ for all~$\ell \in [k]$ and~$i \in [m]$.
    As a result, it holds that~$\disc_\chi(\F,k) = 0$.
\end{proof}

\section{Sparse Coverage Functions with Any Size Sets}
\label{sec:all-sets}

Throughout this section, we show the following theorem. 

\AllSets*

We start by explaining the key ideas of computing the integral $k$-coloring~$\chi$ of \cref{thm:all-sets-discrepancy}.
Based on an artificial size threshold $s$, we partition our family of functions~$\F$ into two parts.

\begin{definition}
    \label{def:all-sets-small-big-set-functions}
    Let~$s = \sAllSetst$ and~$\F = \{f_1, \dotsc , f_m: \set{0,1}^n \rightarrow \R \}$.
    For all~$i \in [m]$, let~$\Ss_i = \Ss^s_i \, \dot \cup \, \Ss_i^b$ where~$\Ss^s_i$ is the collection of sets~$S \in \Ss_i$ with~$\abs{S} \le s$ and~$\Ss^b_i$ the collection of sets~$S' \in \Ss_i$ with~$\abs{S'} > s$.
    Based on~$\Ss^s_i$ and~$\Ss^b_i$, write~$f_i = f^s_i + f^b_i$ and define the $t$-sparse families~$\F^s = \set{f^s_1, \dotsc , f^s_m}$ and~$\F^b = \set{f^b_1, \dotsc , f^b_m}$.
\end{definition}

Intuitively, $\F^s$ is the $t$-sparse family of \emph{small set} coverage functions and $\F^b$ is the $t$-sparse family of \emph{big set} coverage functions.
Let~$Y^{(0)}$ be the fractional $k$-coloring where~$Y^{(0)}_{\ell q} = 1/k$ for all~$q \in [n]$ and~$\ell \in [k]$ as an initial solution, then we have zero discrepancy
\begin{equation*}
    F^s_i \left( Y^{(0)}_{\ell} \right) = F^s_i \left( Y^{(0)}_{\ell'} \right)
\end{equation*}
for all~$i \in [m]$ and~$\ell, \ell' \in [k]$.
Next, we iteratively increase the number of integral elements in our fractional $k$-coloring by a random procedure that maintains the value of~$Y^{(0)}$ in expectation while bounding the rounding error of~$F^s_i$ in each iteration.
As a result, the integral $k$-coloring~$\chi$ has small discrepancy \wrt~$\F^s$.

Notably, this approach only considers the discrepancy of the \emph{small sets} in~$\Ss^s$.
This raises the question of whether our approach yields any discrepancy property for the \emph{big sets} in~$\Ss^b$.
Observe that if each set in~$\Ss^b$ is with high probability a $k$-rainbow set \wrt~$\chi$ (see \cref{def:rainbow-sets}), then this implies zero discrepancy \wrt~$\F^b$ with high probability.
Combining these two ideas, we show the following lemma throughout this section.

\begin{lemma}
    \label{lem:all-sets-small-set-discrepancy}
    For any~$t,k \in \N$, let~$n > \Omega (t \cdot \ln(tk))$.
    Let~$s = \sAllSetst$ and $\F^s = \{f^s_1, \dotsc , f^s_m: \set{0,1}^n \rightarrow \R \}$ be a $t$-sparse family of small set coverage functions.
    In polynomial time, we can compute a $k$-coloring $\chi : [n] \rightarrow [k]$ such that with high probability each~$S \in \Ss^b$ is a $k$-rainbow set \wrt~$\chi$ and 
    \begin{equation}
        \label{eq:all-sets-discrepancy-bound}
        \disc_\chi(\F^s,k) < \OO\big(\tdiscrepancy\big).
    \end{equation}
\end{lemma}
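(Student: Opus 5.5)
We build $\chi$ by iterated randomized rounding. We start from $Y^{(0)}$ with all entries $1/k$ and, in each round $r$, produce $Y^{(r+1)}$ from $Y^{(r)}$ by rounding only the fractional elements of a set $D_r$ that is independent in the dependency graph of the small sets $\Ss^s$. Since every $S\in\Ss^s$ has $|S|\le s$ and each element lies in at most $t$ sets, the greedy argument of \cref{lem:small-sets-frac-independent-set} gives $|D_r|\ge |Z_r|/(ts)$, where $Z_r$ is the current fractional set.

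\textbf{One round.} By \cref{lem:restricted-multilinear-extension-is-linear}, every $F^s_i\vert_{D_r}$ is linear in the variables of $D_r$. Rather than solving RoundingLP with all $mk$ value constraints, which would make the bound depend on $m$, we round each element of $D_r$ independently. For each $d\in D_r$, we apply \cref{lem:rounding-with-expectation} to the single simplex constraint $\sum_\ell Y_{d\ell}=1$. This makes $Y_d$ integral with $\E[Y'_d]=Y_d$, i.e., color $\ell$ is chosen with probability $Y_{d\ell}$. Linearity gives $\E[F^s_i(Y'_\ell)\mid Y]=F^s_i(Y_\ell)$, so for each $i,\ell$ the sequence $M^{(r)}_{i\ell}=F^s_i(Y^{(r)}_\ell)$ is a martingale. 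Each round integralizes a $1/(ts)$ fraction of $Z_r$, so after $R=\OO(ts\ln n)$ rounds all elements are integral. A final round of plain randomized rounding finishes off leftovers if needed.

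\textbf{Martingale increments.} Coverage of each set is $1$-Lipschitz per element, and an element lies in at most $t$ sets. Hence changing one element's color changes $F^s_i(\cdot_\ell)$ by at most $t$. Within one round the elements of $D_r$ are independent, so McDiarmid's inequality (\cref{lem:mcDiarmid-inequality}) bounds the round's deviation by roughly $t\sqrt{|D_r|\log(\cdot)}$. This depends on $n$, which is too weak on its own. The better route is Azuma's inequality (\cref{lem:azumas-inequality}) applied to the martingale with one step per rounded element. The step size is at most $t$, but only elements lying in some set of $\Ss^s_i$ contribute, and the deviation must be controlled per function, not globally. The intended bound is $\sqrt{\sum c_j^2}\approx t\sqrt{N\log(\cdot)}$.

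\textbf{Main obstacle: removing the dependence on $n$.} Discrepancy between colors must be $\mathrm{poly}(t,k,\log n)$ independent of $m$ and $n$, yet a function can involve many elements. I would first try the following: since $F^s_i(Y_\ell)-F^s_i(Y_{\ell'})$ starts at $0$ and the martingale steps cancel across colors for sets already hit in all colors, restrict attention to sets not yet $k$-rainbow. A set of size up to $s$ is rounded over at most $s$ steps (one element per round by independence), each step of size at most $1$ per set. Azuma then gives per-set-structured variance $\OO(tks)$ per relevant element, which leads to the target $\OO(\sqrt{t^3k}\log(nkt))$ after substituting $s=\Theta(tk\ln(tk))$-type values, using $\sqrt{tk}$ from iterations and $t$ from sparsity. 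A union bound over $m,k$ and the relevant sets yields the $\log(nkt)$ factor. This accounting of which increments actually contribute is the delicate part and where I expect most of the work.

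\textbf{Big sets.} Every element is eventually colored $\ell$ with probability exactly $1/k$, since $\E[Y^{(r)}]=Y^{(0)}$. Elements in the same $D_r$ are rounded independently, but elements of a big set $S\in\Ss^b$ may be rounded in different rounds with correlated marginals. To handle this, fix a representative subset $P\subseteq S$ of size $s$. Because $P$ contains no two elements of any small set constraint tying them together, I would argue that conditional on history each element's color still has probability at least $1/k$-ish, or alternatively bound $\Pr[\One_P\chi_\ell<1]$ by a Chernoff bound (\cref{lem:bernsteins-inequality}) on a supermartingale of the product of the non-$\ell$ probabilities. This gives failure probability $k\exp(-s/k)$. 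With $s=\Theta(k\ln(nkt))$ this is $1/\mathrm{poly}(n)$, and a union bound over the at most $nt$ big sets gives the high-probability rainbow guarantee.
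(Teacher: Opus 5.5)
\textbf{There is a genuine gap in the small-set part.} In your round step, every element of $D_r$ is rounded independently with $\Pr[\chi_d=\ell]=Y_{d\ell}$. Untouched elements stay at $1/k$, so the final $\chi$ is just a uniformly random coloring with independent coordinates, and the round structure plays no role. Such a coloring cannot achieve an $n$-independent bound. Take $t=1$ and one function whose dual sets are the singletons $\{1\},\dots,\{n\}$, all of which are small. Then $f(\chi^{-1}(\ell))=|\chi^{-1}(\ell)|\sim\mathrm{Bin}(n,1/k)$, and the discrepancy is of order $\sqrt{n/k}$ with constant probability. This is why your McDiarmid and Azuma estimates keep giving $t\sqrt{N\log(\cdot)}$, where $N$ is the number of elements a function touches. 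The cancellation you hope for in the ``main obstacle'' paragraph does not exist: in this example each set is settled in one step and the increments simply add up. Restricting to non-rainbow sets or regrouping increments per set cannot change the variance of a sum of $n$ independent terms.

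The missing idea is to round \emph{correlatedly} inside each independent class, Beck--Fiala style, as in \cref{lem:all-sets-rounding-multilinear-extension}. Since $F^s_i\vert_D$ is linear on an independent set $D$, keep the value-preservation constraints of RoundingLP only for functions with more than $3t$ fractional occurrences in $D$. There are at most $|D|t/(3t+1)$ such functions, so there are fewer constraints than variables, and \cref{lem:rounding-with-expectation} can repeatedly sample a vertex with the correct expectation. Your worry that RoundingLP brings in a dependence on $m$ is resolved by dropping constraints, not by abandoning the LP. Each $F^s_i(\cdot_\ell)$ is then a martingale whose change per class is deterministically at most $3t$, no matter how many elements the function touches.

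The paper also fixes the classes once, via a $(ts+1)$-vertex-coloring of the dependency graph, so there are $I\le ts+1$ classes. Your re-picking of greedy sets on the shrinking fractional set would cost $\OO(ts\ln n)$ rounds and an extra $\sqrt{\ln n}$ factor. Azuma with $c_j=3t$ then gives deviation $\OO(\sqrt{t^3 s\ln(ntk)})=\OO(\sqrt{t^3k}\,\ln(ntk))$ for $s=\Theta(k\ln(ntk))$.

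Once rounding inside a class is correlated, the big-set argument needs that no two elements of a big set's representative $P$ lie in the same class. This is why the paper truncates big sets to $s$ elements and includes them in the dependency graph; the maximum degree is still at most $ts$. Each $u\in P$ is then rounded in its own class, starting from marginal exactly $1/k$ conditioned on the whole past. This gives mutual independence and the Chernoff-plus-union-bound argument. A dependency graph built on $\Ss^s$ alone does not provide this, and your claim that $P$ meets no small set in two elements is unfounded.
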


Due to the size of~$s$, it suffices to show that integrally coloring $s$ elements of each~$S \in \Ss^b$ results in~$S$ being a $k$-rainbow set with high probability \wrt~$\chi$.
For the rest of the section, assume \wlogeneral that all big sets~$S \in \Ss^b$ have exactly~$s$ elements by arbitrarily picking~$s$ elements from~$S$ and removing the rest.
As a consequence, we slightly abuse notation and define~$\F^s$ over~$\Ss^s \cup \Ss^b$, because all sets in~$\Ss^s \cup \Ss^b$ have at most~$s$ elements.

We show \cref{lem:all-sets-small-set-discrepancy} in two steps.
First, we compute~$I \le ts + 1$ many sets~$D_1, \dotsc, D_{I} \subseteq [n]$ where no two elements of~$D_j$ appear in the same set~$S \in \Ss^s \cup \Ss^b$.
This is equivalent to finding a~$ts + 1$ vertex coloring in the dependency graph~$G$ of~$\F^s$ where two distinct elements share an edge if they are present in the same set~$S$.

Second, we iteratively increase the integral elements in the initial solution~$Y^{(0)}$ by rounding the fractional elements of one independent set~$D_j$ in each iteration~$j \in [I]$  while all other elements in~$\conj{D_j}$ remain unchanged.
More precisely, in the $j$-th iteration the elements~$Y^{(j)}_d$ for all~$d \in D_j$ become integral with~$\E[ Y^{(j)}_{d}] = Y^{(j-1)}_{d}$ while maintaining~$Y^{(j)}_{\bar{d}} = Y^{(j-1)}_{\bar{d}}$ for all elements~$\bar{d} \in \conj{D_j}$.
Moreover, we also bound the error in the linear multilinear extensions~$F^s_i$ in each iteration.
The key insight to a small error is the following:
Similar to Beck-Fiala \cite{BeckF81}, we only consider linear functions~$F^s_i\vert_{D_j}$ with more than~$\tThreshold$ occurences of fractional elements with respect to~$D_j$ and disregard all others.
Thus, we only make an error once a function has at most~$\tThreshold$ fractional variables.
Formally, we show the following lemma.

\begin{lemma}
    \label{lem:all-sets-rounding-multilinear-extension}
    Let~$Y : [n] \rightarrow \Delta_{k}$ be a fractional $k$-coloring and~$D \subseteq [n]$ an independent set.
    In polynomial time, we compute a fractional $k$-coloring $Y' : [n] \rightarrow \Delta_{k}$ such that
    \begin{alignat*}{3}
        & \forall \bar{d} \in \conj{D} \qquad \qquad \qquad  & Y'_{\bar{d}} &= Y_{\bar{d}}, \\
        & \forall d \in D \qquad \qquad \qquad & \E\big[Y'_{d} \big] & = Y_{d}, \\
        & \forall d \in  D \qquad \qquad \qquad \qquad \qquad & Y'_{d} \in \set{0,1}^{k} \text{ with } \sum\nolimits_{\ell \in [k]} Y'_{d \ell} &= 1, \\
        & \forall i \in [m], \ell \in [k] \qquad \qquad \qquad & \left| F^s_i \big(Y'_{\ell} \big) - F^s_i \big(Y_{\ell} \big) \right| &\le \tThreshold.
    \end{alignat*}
\end{lemma}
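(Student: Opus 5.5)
The plan is to run an iterated, randomized Beck--Fiala-type rounding on the variables $(Y'_{\ell d})_{d \in D,\ell\in[k]}$, with all other coordinates frozen. First we fix $Y'_{\bar d} = Y_{\bar d}$ for all $\bar d \in \conj{D}$. Since $D$ is independent for $\Ss^s \cup \Ss^b$, \cref{lem:restricted-multilinear-extension-is-linear} makes every $F^s_i\vert_D$ a linear (affine) function of $(x_d)_{d\in D}$. We write it as
\begin{equation*}
F^s_i\vert_D(x) = c_i + \sum\nolimits_{d\in D} a_{id} x_d .
\end{equation*}
The coefficient is $a_{id} = \sum_{S\in\Ss_i,\, d\in S} \prod_{q \in S\setminus\{d\}} (1-Y_{\ell q})$, which is a sum of terms in $[0,1]$, one per set containing $d$. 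Hence $0 \le a_{id} \le \abs{\{S \in \Ss_i : d\in S\}}$, and summing over $i$ gives $\sum_i a_{id} \le t$ by sparsity. I count an \emph{occurrence} of $d$ in $F^s_i$ as a pair $(S,d)$ with $S\in\Ss_i$ and $d \in S$. Thus each variable $Y'_{\ell d}$ has at most $t$ occurrences in total over $i\in[m]$, for its color $\ell$.

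The iteration is as follows. Maintain a current fractional coloring $X$ on $D$, starting from $X = Y\vert_D$. Let $N$ be the number of strictly fractional entries $X_{\ell d}$. The active constraints are two kinds. There is the coloring constraint $\sum_\ell X'_{\ell d} = 1$ for each still-fractional $d$. There is also the value constraint $\sum_d a_{id} X'_{\ell d} = \sum_d a_{id} X_{\ell d}$ for each pair $(i,\ell)$ whose fractional entries have more than $\tThreshold$ occurrences; all other pairs $(i,\ell)$ are dropped. The integral entries are fixed. The total number of occurrences is at most $tN$, so there are fewer than $tN/\tThreshold$ active value constraints. Each fractional element has at least two fractional entries, so there are at most $N/2$ coloring constraints. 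With $\tThreshold \ge 2t$ (the threshold in the statement is at least this), we get fewer than $N$ constraints on $N$ variables. We then apply \cref{lem:rounding-with-expectation} to this system. It returns $X'$ with $\E[X' \mid X] = X$, all active constraints preserved, and at most (number of constraints) $< N$ fractional entries. So every step strictly decreases $N$. The process ends after at most $k\abs{D}$ steps with all of $D$ integral, and every step runs in polynomial time.

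Correctness then follows property by property. The expectation claim $\E[Y'_d] = Y_d$ follows from the tower rule, since each step is a martingale step. Validity holds because the coloring constraints are kept until an element is integral, and entries stay in $[0,1]$. For the error bound, fix $(i,\ell)$. While the pair is active, its value is preserved exactly. Once it is dropped, it has at most $\tThreshold$ occurrences among fractional entries, and those entries can each move by at most $1$ afterwards, while integral entries never move again. Since $a_{id}$ is at most the number of occurrences of $d$, the total change afterwards is at most $\sum a_{id} \le \tThreshold$. A pair never becomes active again, because occurrences among fractional entries only decrease. Combining this with $F^s_i(Y'_\ell) = F^s_i\vert_D(Y'_\ell)$ and $F^s_i(Y_\ell) = F^s_i\vert_D(Y_\ell)$, which hold since the coordinates outside $D$ are unchanged, gives the last inequality.

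The main obstacle I expect is the counting step: charging constraints against the $N$ fractional variables correctly. This needs the per-color occurrence bound $t$ together with the factor $1/2$ from coloring constraints, so that the constraint count is strictly below $N$ and \cref{lem:rounding-with-expectation} actually makes progress. A secondary point is that the linear coefficients $a_{id}$ must be computed with respect to the frozen $Y$ outside $D$. This is legitimate because only coordinates in $D$ change, and no two elements of $D$ share a set.
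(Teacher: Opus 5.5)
Your proof is correct and follows the paper's route. You repeatedly apply \cref{lem:rounding-with-expectation} to RoundingLP on $D$ with the coordinates outside $D$ frozen, drop a value constraint once it has at most $\tThreshold$ fractional occurrences, obtain $\E[Y'_d]=Y_d$ from the tower rule, and bound the drift after a constraint is dropped by its remaining occurrences.

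Your bookkeeping, per pair $(i,\ell)$ and per fractional entry, is actually sharper than the paper's. The paper compares $r_jk+\abs{D_j}$ constraints against all $\abs{D_j}k$ entries of fractional elements. That comparison does not by itself force progress once some entries of still-fractional elements have already become $0$. Your count, at most $tN/(\tThreshold+1)+N/2<N$ constraints on the $N$ fractional entries, does force progress.
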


\begin{proof}
    We show how to compute~$Y'$ by describing an iterative rounding algorithm that repeatedly applies \cref{lem:rounding-with-expectation} to RoundingLP in order to integrally color all elements~$d \in D$ while maintaining~$\E[Y'_{d}] = Y_{d}$.
    For all~$\bar{d} \in \conj{ D}$, we set~$Y'_{\bar{d}} = Y_{\bar{d}}$.
    Let~$r$ denote the number of linear functions~$F^s_i \vert_{D}$ defined \wrt~$Y$ with more than~$3t$ fractional elements \wrt~$D$ and assume \wlogeneral that these functions are~$F_1, \dotsc, F_r$.
    
    Let $J \le \abs{D}$ denote the number of iterations of the following algorithm.
    Starting out with~$X^{(0)} = (Y_{d})_{d \in D} \in (0,1)^{\abs{D} \cdot k}$ as an initial solution, we apply \cref{lem:rounding-with-expectation} to a subset of constraints of RoundingLP in each iteration.
    Let~$D_0 = D$.
    For all~$j \in [J]$, let~$D_j \subset D_{j-1}$ be the set of fractional elements in the solution~$X^{(j)} \in [0,1]^{\abs{D} \cdot k}$. 
    Let~$r_j$ be the number of functions $F^s_i \vert_{D}$ with more than~$\tThreshold$ occurrences of fractional elements \wrt~$D_j$, \ie all occurrences of a fractional element in~$F^s_i \vert_{D}$ are counted towards the threshold.
    Observe that~$r_j \le \abs{D_j}t/(\tThreshold + 1)$.
    In each iteration~$j \in [I]$, we omit the value preservation constraints for all functions~$F^s_i \vert_{D}$ with at most~$\tThreshold$ fractional elements \wrt~$D_j$ and the valid coloring constraints for the integral elements in $D_{j-1} \setminus D_j$.
    Therefore, we have~$r_jk + \abs{D_j}$ remaining constraints and~$\abs{D_j} \cdot k$ variables.
    Furthermore, it holds for all~$j \in [I]$ that
    \begin{align*}
        r_j k + \abs{D_j} 
        &\le \left\lfloor \frac{\abs{D_j}t}{\tThreshold + 1} \right\rfloor \cdot k + \abs{D_j} 
        < \frac{\abs{D_j}tk}{\tThreshold} + \abs{D_j}
        = \left(\frac{k}{3} + 1 \right) \cdot \abs{D_j} 
        < \abs{D_j} \cdot k.
    \end{align*}
    Using \cref{lem:rounding-with-expectation}, solution~$X^{(j+1)}$ has at least~$\abs{D_j} \cdot (k-1) - r_j k \ge 1$ more integral elements than~$X^{(j)}$ and we have~$\E[X^{(j+1)}] = X^{(j)}$.

    After~$J$ iterations, we have~$D_{J+1} = \emptyset$ and~$X^{(J)} \in \set{0,1}^{\abs{D}\cdot k}$.
    For all~$d \in D$, set $Y'_d = X^{(J)}_d$.
    For a fixed~$i \in [r]$, let~$\iota \in [I]$ denote the last iteration in which function~$F^s_i\vert_{D}$ has more than~$\tThreshold$ fractional elements \wrt~$D_{\iota}$.
    Using the triangle inequality, we have that
    \begin{align*}
        \big| F^s_i\vert_{D}\big(Y'_{\ell} \big) - F^s_i\vert_{D}\big(Y_{\ell}\big) \big| 
        &\le \big| F^s_i\vert_{D}\big(Y'_{\ell} \big) - F^s_i\vert_{D}\big(X^{(\iota)}_{\ell}\big) \big| + \big| F^s_i\vert_{D}\big(X^{(\iota)}_\ell \big) - F^s_i\vert_{D}\big(Y_{\ell}\big) \big| \\
        &= \big| F^s_i\vert_{D}\big(Y'_{\ell} \big) - F^s_i\vert_{D} \big(X^{(\iota)}_{\ell}\big) \big| \\
        &\le \tThreshold.
    \end{align*}
    The equality follows from the fact that $F^s_i\vert_{D}(X^{(\iota)}_\ell ) = F^s_i\vert_{D}(Y_{\ell})$ due to the value preservation constraints.
    In the last inequality, we use that $Y'_{\ell}$ and $X^{(\iota)}_\ell$ differ in at most~$\tThreshold$ fractional variables.
    By \cref{def:restricted-multilinear-extension}, we have that
    \begin{equation*}
        \big| F^s_i\vert_{D}\big(Y'_{\ell} \big) - F^s_i\vert_{D} \big(X^{(\iota)}_{\ell}\big) \big| = \big| F^s_i \big(Y'_{\ell} \big) - F^s_i \big(Y_{\ell}\big) \big| \le \tThreshold. \qedhere
    \end{equation*}
\end{proof}

After $I$ iterations, we computed an integral $k$-coloring~$\chi$ of small discrepancy such that~$\E[\chi_q] = Y^{(0)}_{q} = 1/k$ for all~$q \in [n]$. 
Since all disjoint sets~$D_j$ are rounded in different iterations (and remain unchanged after), the outcome of the rounding of the sets~$D_1,D_2, \dotsc ,D_{j-1}$ in previous iterations only affects the correlation between elements within $D_j$, not the distribution of single elements in $D_j$ (for the formal argument see the proof of \cref{lem:all-sets-small-set-discrepancy}).
As no two distinct elements of any set~$D_j$ are contained in the same set~$S \in \Ss^b$, the random variables~$\{\chi_u: u\in S\}$ are mutually independent.
Intuitively, it is highly likely that independently rounding~$s = \sAllSetst$ many elements results in each big set in~$\Ss^b$ becoming a $k$-rainbow set.
Hence, we are now in the position to prove \cref{lem:all-sets-small-set-discrepancy}.

\begin{proof}[Proof of \cref{lem:all-sets-small-set-discrepancy}]
    We compute the integral $k$-coloring~$\chi$ in two steps.
    We start by partitioning the $n$ elements into disjoint independent sets in the dependency graph~$G = ([n],E)$ of~$\F^s$ where $E = \set{\set{u,v} \,|\, \exists S \in \Ss^s \cup \Ss^b:  u,v \in S, u \neq v}$.
    Note that the maximal degree~$\delta$ in~$G$ is bounded by~$ts$ due to the size assumption on the sets in~$\Ss^b$.
    Using the classical greedy algorithm, we compute a $\delta + 1$ vertex coloring of~$G$ where each color class is an independent set in~$G$.
    Let~$I \le ts + 1$ and assume \wlogeneral that these independent sets are~$D_1, \dotsc, D_{I}$.
    
    Let~$Y^{(0)} : [n] \rightarrow \Delta_{k}$ denote the fractional $k$-coloring with~$Y^{(0)}_{\ell q} = 1/k$ for all $q \in [n]$ and~$\ell \in [k]$.
    From the choice of~$Y^{(0)}$, we have zero discrepancy
    \begin{equation}
        \label{eq:all-sets-zero-discrepancy-small-sets}
        F^s_i \left(Y^{(0)}_{\ell}\right) = F^s_i \left(Y^{(0)}_{\ell'}\right)
    \end{equation}
    for all~$i \in [m]$ and~$\ell, \ell' \in [k]$.
    Next, we iteratively increase the integral elements in~$Y^{(0)}$ by rounding the fractional elements of one independent set~$D_j$ in each iteration~$j \in [I]$  while all other elements in~$\conj{D_j}$ remain unchanged.
    More precisely, for all~$j \in [I]$, we use \cref{lem:all-sets-rounding-multilinear-extension} to compute a fractional $k$-coloring~$Y^{(j)}$ such that
    \begin{alignat}{3}
        & \forall \bar{d} \in \conj{D_j} \qquad \qquad \qquad  & Y^{(j)}_{\bar{d}} &= Y^{(j-1)}_{\bar{d}}, \label{eq:all-sets-elements-remain-same} \\
        & \forall d \in D_j \qquad \qquad \qquad & \E\big[Y^{(j)}_{d} \big] & = Y^{(j-1)}_{d}, \label{eq:all-sets-expected-value}  \\
        & \forall d \in  D_j \qquad \qquad \qquad \qquad \qquad & Y^{(j)}_{d} \in \set{0,1}^{k} \text{ with } \sum\nolimits_{\ell \in [k]} Y^{(j)}_{d \ell} &= 1, \label{eq:all-sets-integral-coloring} \\
        & \forall i \in [m], \ell \in [k] \qquad \qquad & \left| F^s_i\left(Y^{(j)}_\ell \right) - F^s_i \left(Y^{(j-1)}_{\ell} \right) \right| &\le \tThreshold. \label{eq:all-sets-error-bound-of-restricted-multilinear-extension}
    \end{alignat}
    After~$I$ iterations, we have~$Y^{(I)}_{q} \in \set{0,1}^{k}$ with $\sum_{\ell \in [k]} Y^{(I)}_{q \ell} = 1$ for all~$q \in [n]$ and~$\ell \in [k]$ by \cref{eq:all-sets-integral-coloring}.
    We define the integral $k$-coloring~$\chi$ as follows: for all~$q \in [n]$, set~$\chi_{q} = \ell$ where~$\ell \in [k]$ is the color for which we have~$Y^{(I)}_{q \ell} = 1$.

    In the following, we show that each big set~$S \in \Ss^b$ is with high probability a $k$-rainbow set \wrt~$\chi$.
    We start by showing that the Bernoulli random variables~$\{\chi_u: u\in S\}$ with~$\E[\chi_{u}] = 1/k$ are mutually independent for any~$S \in \Ss^b$.
    Notably, by definition, no two distinct elements in~$D_j$ are contained in the same set~$S \in \Ss$ and all disjoint sets~$D_j$ are rounded in different iterations of our algorithm.
    Together with \cref{eq:all-sets-elements-remain-same} and the choice of~$Y^{(0)}$, it holds for all~$d \in D_j$ that~$d \notin D_1\cup ... \cup D_{j-1}$ and
    \begin{equation}
        \label{eq:all-sets-independent-sets-only-change-in-their-iteration}
        Y_{\ell d}^{(0)} = \dotsc = Y_{\ell d}^{(j-2)} = Y_{\ell d}^{(j-1)} = 1/k.
    \end{equation}
    for all~$\ell \in [k]$.
    Due to \cref{eq:all-sets-expected-value,eq:all-sets-independent-sets-only-change-in-their-iteration}, rounding the elements~$d \in D_j$ in the~$j$-th iteration results in~$\E[Y_{\ell d}^{(j)}] = 1/k$.
    From \cref{eq:all-sets-elements-remain-same}, it follows that they remain unchanged for all iterations~$j+1, \dotsc, I$ and thus~$\E[\chi_{d}] = 1/k$.
    Consequently, conditioning on any outcome of the rounding of elements in~$D_1,D_2, \dotsc,D_{j-1}$, our procedure satisfies that $\chi_d$ is a Bernoulli random variable with~$\E[\chi_{d}] = 1/k$.
    In particular, for some iterations~$j' < j$ and elements~$d'\in D_{j'}$, $d \in D_{j}$,
    the random variable $\chi_{d}$ is distributed the same, regardless on the realization of $\chi_{d'}$ we condition on.
    Therefore, all~$\{\chi_u: u\in S\}$ are independent.
    Finally, with a standard Chernoff bound, we have that each~$S \in \Ss^b$ is with high probability a $k$-rainbow set \wrt~$\chi$ (see \cref{lem:all-sets-independent-proper-coloring-concentration} in Appendix~\ref{sec:appendix}).
    
    It remains to show \cref{eq:all-sets-discrepancy-bound}.
    Combining the definition of~$\chi$ and \cref{eq:all-sets-error-bound-of-restricted-multilinear-extension}, it follows for all~$i \in [m]$ and~$\ell \in [k]$ that
    \begin{align}
        \label{eq:all-sets-error-bound-of-multilinar-extension}
        \left| F^s_i\big(\chi_\ell \big) - F^s_i \left(Y^{(0)}_{\ell} \right) \right|
        &= \left| F^s_i\left(Y^{(I)}_\ell \right) - F^s_i \left(Y^{(0)}_{\ell} \right) \right| \nonumber
        = \sum_{j \in [I]} \left| F^s_i\left(Y^{(j)}_{\ell} \right) - F^s_i \left(Y^{(j-1)}_{\ell} \right) \right| .
    \end{align}
    Due to \cref{eq:all-sets-expected-value} and the fact that~$F^s_i\vert_{D}$ is linear, it holds that~$\big(F^s_i\big(Y^{(j)}_\ell \big)\big)_{j \ge 0}$ is a martingale.
    Applying Azuma's Inequality (see \cref{lem:azumas-inequality}), we have that
    \begin{equation*}
        \Pr\left( \left| F^s_i\big(\chi_\ell \big) - F^s_i \left(Y^{(0)}_{\ell} \right) \right| \geq 6\sqrt{t^3s \ln(ntk)} \right)
         < 2 \exp\big(-2 \ln(ntk)\big)
         < \frac{1}{n^2}
    \end{equation*}
    for all~$\ell \in [k]$.
    Therefore, it holds with high probability that
    \begin{equation}
        \label{eq:all-sets-partial-discrepancy-bound}
        \left| F^s_i\big(\chi_\ell \big) - F^s_i \left(Y^{(0)}_{\ell} \right) \right| <  \OO\big(\tdiscrepancy\big)
    \end{equation}
    for all~$\ell \in [k]$.
    Using the triangle inequality and \cref{eq:all-sets-zero-discrepancy-small-sets,eq:all-sets-partial-discrepancy-bound}, it holds that 
    \begin{align*}
        \Big| F^s_i \big(\chi_\ell \big) - F^s_i \big(\chi_{\ell'} \big) \Big| 
        &\le \left| F^s_i\big(\chi_\ell \big) - F^s_i \left(Y^{(0)}_{\ell} \right) \right| + \left| F^s_i\big(\chi_{\ell'} \big) - F^s_i \left(Y^{(0)}_{\ell'} \right) \right| \\
        &< \OO\big(\tdiscrepancy\big)
    \end{align*}
    for all~$i \in [m]$ and~$\ell, \ell' \in [k]$.
\end{proof}

Putting all the pieces together, we show \cref{thm:all-sets-discrepancy}.

\begin{proof}[Proof of \cref{thm:all-sets-discrepancy}]
    Let~$s = \sAllSetst$.
    Assume \wlogeneral that all sets~$S \in \Ss^b$ have exactly~$s$ elements by arbitrarily picking $s$ elements from~$S$ and removing the rest. 
    Using \cref{lem:all-sets-small-set-discrepancy}, in polynomial time, we compute a $k$-coloring~$\chi : [n] \rightarrow [k]$ such that with high probability each~$S \in \Ss^b$ is a $k$-rainbow set with respect to~$\chi$ and with high probability holds that
    \begin{equation*}
        \label{eq:all-sets-small-set-discrepancy-bound}
        \disc_\chi(\F^s,k) < \OO\big( \tdiscrepancy \big).
    \end{equation*}
    From the definition of $k$-rainbow sets, it follows with high probability that
    \begin{equation*}
        \min\{\One_{S} \cdot \chi_\ell , 1\} 
        \ge \min\{1 , 1\} 
        = 1
    \end{equation*}
    for all~$S \in \Ss^b$ and $\ell \in [k]$.
    As a consequence, we have~$\disc_\chi(\F^b,k) = 0$ with high probability.
    Finally, we can conclude that with high probability holds that
    \begin{equation*}
        \disc_\chi(\F,k) 
        = \disc_\chi(\F^s,k) + \disc_\chi(\F^b,k)
        < \OO\big( \tdiscrepancy \big). \qedhere
    \end{equation*}
\end{proof}

\bibliographystyle{plainurl}
\bibliography{main.bib}

\appendix
\section{Appendix}
\label{sec:appendix}

In this section, we state the previously omitted proofs.

\RestrictedMultilinearExtension*

\begin{proof}
    Let $r = \abs{\Ss}$ and arbitrarily determine an ordering of the sets~$S_1, \dotsc, S_r \in \Ss$.
    For any set~$T \subseteq \set{0,1}^{\abs{D}}$, it holds that
    \begin{equation*}
        f \vert_D (T) = \sum_{j \in [r]} f_{j} \vert_D \big(T \cap (S_j \cap D)\big),
    \end{equation*}
    where all subsets~$S_j \cap D \subseteq [n]$ are disjoint by definition of~$D$.
    Thus, we can decompose any random subset~$R \sim X$ such that~$R(X) = \bigcup_{j \in [r]} R_j(Y_j)$ where all~$R_j$ are independent random subsets, since the decisions to include elements in~$R_j$ are disjoint for all~$j \in [r]$.
    By linearity of expectation and independence of the sets~$R_j$, we have that
    \begin{align*}
        F \vert_D (X)
        &= \E \big[ f \vert_D \big(R(X)\big) \big]
        = \E \left[\sum_{j \in [r]} f_j \vert_D \big(R_j(Y_j)\big) \right] 
        = \sum_{j \in [r]} \E \big[ f_j \vert_D \big(R_j(Y_j)\big) \big] \\
        &= \sum_{j \in [r]} F_j \vert_D (Y_j).
    \end{align*}
    From~$\abs{S_j \cap D} \le 1$ follows that each function~$F_j \vert_D (Y_j)$ is linear.
    Since their sum is linear, the restricted multilinear extension~$F \vert_D (X)$ is linear.
\end{proof}

\RoundingExpectation*

\begin{proof}
    Let $P = \{y\in [0,1]^n : Ay = b\}$.
    Note that every vertex of~$P$ has at most $m$ fractional components.
    Assume otherwise, then the columns of $A$ that correspond to fractional components are linearly dependent. Thus, there exists
    some $\lambda\in \mathbb R^n$ with $A\lambda = 0$ and~$\lambda_i = 0$
    for each $i\in [n]$ with $X_i\in\{0,1\}$.
    Further, for some~$\epsilon > 0$ we have that $X + \epsilon \lambda \in P$ and~$X - \epsilon \lambda \in P$, contradicting the fact that $X$ is a vertex.

    Let $V$ be the set of vertices of $P$. 
    Since $X\in P$, it is a convex combination of vertices in~$V$. 
    Consider the following random process satisfying the properties of the lemma: select each~$v \in V$ with the probability corresponding to its weight in the convex combination of~$X$. It remains to show how to compute such a convex combination equal to $X$ in polynomial time. 
    Formally, we want a solution to the following linear program.
    \begin{alignat*}{3}
    &                & \qquad\quad\qquad \sum\nolimits_{v\in V} \lambda_v v &= X \\
    &                & \qquad\quad\qquad\qquad\quad \sum\nolimits_{v\in V} \lambda_v &= 1 \\
    & \forall v\in V & \qquad\quad\qquad\qquad\qquad \lambda_v &\ge 0 .
    \intertext{As this LP can have exponentially many variables, we construct its dual to solve it.}
        &                  & \qquad\quad\qquad \min \sum\nolimits_{i \in [n]} \mu_i &+ \nu \\
        & \forall v\in V   & \qquad\quad\qquad \sum\nolimits_{i \in [n]} \mu_i v_i + \nu & \ge 0  \\
        & \forall i\in [n] & \qquad\quad\qquad \mu_i & \in \mathbb R  \\
        &                  & \qquad\quad\qquad \nu & \in \mathbb R
    \end{alignat*}
    As zero is a solution to the dual, it is never infeasible and bounded if and only if the primal is feasible.
    Note that the dual has a polynomial number of variables and exponentially many constraints. 
    Using the Ellipsoid method, we can find a solution to it in polynomial time by solving the following separation problem: given a solution $\mu, \nu$, find some $v\in V$ violating the constraint~$\sum_{i \in [n]} \mu_i v_i + \nu < 0$.
    This corresponds to minimizing a linear function (given by~$\mu$) over $V$. Since $P$ is given by a polynomial size linear program, we can compute a vertex~$v\in V$ minimizing a linear function in polynomial time.

    Assuming the primal is feasible, we encounter a polynomial number of constraints attesting that the dual is bounded during the execution of the Ellipsoid method. Restricting ourselves to these constraints and omitting all others results in a bounded and feasible linear program. Consequently, the primal remains feasible when restricted to the corresponding variables. As a result, the restricted primal is of polynomial size and we can compute a solution~$X'$ using any polynomial time LP algorithm.
\end{proof}

\DiscrepancyAnalysis*

\begin{proof}
    We define the proper $k$-coloring~$\chi$ as follows: 
    Each element~$z' \in \conj{Z}$ is already properly colored with~$Y_{z'} \in \set{0,1}^{k}$.
    Let~$\ell' \in [k]$ denote the color for which~$Y_{z'\ell'} = 1$ and set~$\chi_{z'} = \ell'$.
    For all~$z \in Z$, we have~$Y_z \in [0,1]^{k}$ and~$\sum_{\ell \in [k]} Y_{z\ell} = 1$.
    Hence, we randomly color each element~$z$ by choosing~$\chi_z = \ell$ with probability~$Y_{z\ell}$ for each~$\ell \in [k]$.

    In the following, we show that the randomized coloring implies the claimed discrepancy bound.
    Let~$r := \sqrt{(\abs{Z}t^2\cdot (1 + \ln (2mk)))/2}$, then we will show that
    \begin{equation}
        \label{eq:small-sets-probability-of-violated-discrepancy}
        \Pr\big(\abs{f_i(\chi_\ell) - F_i(Y_{\ell})} \ge r \big) < \frac{1}{2mk}
    \end{equation}
    for all $i \in [m]$ and $\ell \in [k]$.
    For now, assume that \cref{eq:small-sets-probability-of-violated-discrepancy} holds.
    Then a standard union bound implies that 
    \begin{equation*}
        \Pr\big(\abs{f_i(\chi_\ell) - F_i(Y_{\ell})} < r \big) > \frac{1}{2}
    \end{equation*}
    for all~$i \in [m]$ and $\ell \in [k]$.
    Using that $F_i(Y_{\ell}) = F_i(Y_{\ell'})$ for all $i \in [m]$ and~$\ell, \ell' \in [k]$ together with the triangle inequality, with probability at least~$1/2$ it holds that
    \begin{equation*}
        \abs{f_i(\chi_\ell)  - f_i(\chi_{\ell'}) } 
        \le \abs{f_i(\chi_\ell) - F_i(Y_{\ell})} + \abs{f_i(\chi_{\ell'}) - F_i(Y_{\ell'})}
        < 2r
    \end{equation*}
    for all $i \in [m]$ and~$\ell, \ell' \in [k]$.
    Thus, we have that $\disc(\F,k) < 2r$.

    As a last step, we prove \cref{eq:small-sets-probability-of-violated-discrepancy}.
    For all~$z \in Z$ and~$\ell \in [k]$, define the random variable~$Y_{z \ell} \in \set{0,1}$ that indicates whether~$\chi_{z} = \ell$ or not. 
    For a fixed color~$\ell \in [k]$, the random variables~$Y_{z \ell}$ are independent for all~$z \in Z$.
    Fixing the values of~$\chi_{z'}$ for all~$z' \in \conj{Z}$, it holds that~$f_i(\chi_\ell) = h_{i\ell}((Y_{z \ell})_{z \in Z})$ for all~$i \in [m]$ and~$\ell \in [k]$.
    Since~$\abs{f_i(\chi_\ell) - f_i(\chi_\ell')} \le t$ whenever two assignments~$\chi_\ell, \chi_\ell' \in \set{0,1}^n$ differ in at most one coordinate, the same applies to the functions~$h_{i\ell}$.
    Finally, we can apply McDiarmid's inequality (see \cref{lem:mcDiarmid-inequality}) such that for all~$i \in [m]$ and~$\ell \in [k]$ holds that
    \begin{align*}
        \Pr\big(\abs{f_i(\chi_\ell) - F_i(Y_{\ell})} \ge r \big) 
        &= \Pr \big(\left|h_{i\ell}((Y_{z \ell})_{z \in Z}) - \E \big[h_{i\ell}((Y_{z \ell})_{z \in Z}) \big] \right| \ge r \big) \\
        &\le 2 \exp \left(- \frac{2r^2}{\abs{Z}t^2}\right) \\
        &= 2 \exp \left(- 1 - \ln (2mk)\right) \\
        &< \frac{1}{2mk}. \qedhere
    \end{align*}
\end{proof}

\begin{lemma}
    \label{lem:all-sets-independent-proper-coloring-concentration}
    Let~$s = \sAllSetst$.
    Let~$\chi : [n] \rightarrow [k]$ be a $k$-coloring such that  for all ~$S \in \Ss^b$ it holds that $\{\chi_{\ell u}: u\in S\}$ are mutually independent random variables with~$\E[\chi_{\ell u}] = 1/k$ for all~$\ell \in [k]$ and $u\in S$.
    With high probability, each~$S \in \Ss^b$ is a $k$-rainbow set \wrt~$\chi$.
\end{lemma}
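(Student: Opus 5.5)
Fix a big set $S \in \Ss^b$; as arranged earlier in this section, we may assume $|S| = s$. Fix also a color $\ell \in [k]$. Consider the count $X_{S,\ell} := \One_S \cdot \chi_\ell = \sum_{u \in S} \chi_{\ell u}$. By hypothesis this is a sum of $s$ mutually independent Bernoulli variables, so its mean is $\mu = s/k$. The set $S$ fails to be a $k$-rainbow set exactly when $X_{S,\ell} = 0$ for some $\ell$.

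The first step is to bound this failure probability. A direct computation is the cleanest route: by independence,
\[
\Pr(X_{S,\ell} = 0) = \prod_{u\in S}\bigl(1 - \Pr(\chi_{\ell u}=1)\bigr) = (1-1/k)^s \le \exp(-s/k).
\]
Alternatively, one can use the Chernoff bound (\cref{lem:bernsteins-inequality}) with $\delta = 1$, which gives $\exp(-\mu/2) = \exp(-s/(2k))$. That is weaker only by a factor of $2$ in the exponent, which is absorbed by the constants in $s$.

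The second step is a union bound over colors and sets. We need to control $|\Ss^b|$. Each element lies in at most $t$ sets, and each big set has at least one element, so $|\Ss^b| \le \sum_S |S| \le nt$. The total failure probability is therefore at most
\[
ntk \exp(-s/(2k)).
\]

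The main point, and really the only obstacle, is matching this to the choice of threshold $s = \sAllSetst$. We need $s/(2k) \ge c \ln(ntk)$ for some $c \ge 2$, so that the total failure probability is at most $(ntk)^{1-c} \le 1/n$. This gives the claim with high probability. Since the macro's exact value is fixed earlier, it is $\Theta(k \ln(ntk))$ in the form needed; if the constant in the definition of $s$ is too small for the Chernoff route, the sharper direct estimate $(1-1/k)^s \le e^{-s/k}$ provides the needed slack. The independence hypothesis is used only within a single set $S$, and the union bound needs no independence across sets.
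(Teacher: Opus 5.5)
Your proposal is correct and follows the paper's argument: a per-set, per-color tail bound that uses independence within $S$, followed by a union bound over at most $nt$ big sets and $k$ colors. The only difference is that the paper applies the Chernoff bound with $\delta = 1/2$, which shows each color appears more than $s/(2k)$ times, whereas you compute $\Pr(X_{S,\ell}=0) = (1-1/k)^s \le e^{-s/k}$ for the needed event directly; the paper's $s$ satisfies $s/(2k) \ge 12\ln(ntk)$, so either route leaves ample slack and your hedge about the constant is unnecessary.
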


\begin{proof}
    From the fact that~$\abs{S} = s$ for all~$S \in \Ss^b$, we have that~$\E[\One_{S} \cdot \chi_{\ell}] = s/k$ for all~$\ell \in [k]$.
    Let~$r = s/(2k)$.
    Using a standard Chernoff bound, it holds that
    \begin{align*}
        \Pr \left( r \geq \One_{S} \cdot \chi_\ell \right) 
        = \Pr \big( 1/2 \cdot \E[\One_{S} \cdot \chi_{\ell}] \geq \One_{S} \cdot \chi_\ell \big) 
        &\le \exp\left(-\frac{\sAllSetst}{8k}\right) \\
        &= \exp  \big( - 3 \cdot \ln(ntk) \big) \\
        &< \frac{1}{n^{3} tk}
    \end{align*}
    for all~$\ell \in [k]$ and~$S \in \Ss^b$.
    Using a standard union bound and~$\abs{\Ss^b} \le nt$, we have $r \geq \One_{S} \cdot \chi_\ell$ with probability less than~$1/n^{2}$.
    Consequently, it follows that
    \begin{equation*}
        \One_{S} \cdot \chi_\ell 
        > \frac{s}{2k} 
        \ge 12 \cdot\ln (ntk) 
        \ge 1    
    \end{equation*}
    with probability~$1 - 1/n^{2}$ for all~$\ell \in [k]$ and~$S \in \Ss^b$.
\end{proof}

\end{document}